\newcommand{\N}{\mathbb{N}}
\newcommand{\Q}{\mathbb{Q}}
\newcommand{\C}{\mathbb{C}}
\newcommand{\Z}{\mathbb{Z}}
\newcommand{\vtx}[1]{*+[o][F-]{\scriptscriptstyle #1}}
\newcommand{\F}{\mathbb{F}}
\newtheorem{definition}{Definition}
\newtheorem{proposition}{Proposition}
\newtheorem{example}{Example}
\newtheorem{remark}{Remark}
\title[Motivic measures and $\F_1$-geometries]{Motivic measures and $\F_1$-geometries}
\author{Lieven Le Bruyn} 
\address{Department Mathematics, University  of Antwerp ,
 Middelheimlaan 1, B-2020 Antwerp (Belgium) {\tt lieven.lebruyn@uantwerpen.be}}
\begin{document}
\sloppy

\maketitle

\begin{abstract} Right adjoints for the forgetful functors on $\lambda$-rings and bi-rings are applied to motivic measures and their zeta functions on the Grothendieck ring of $\F_1$-varieties in the sense of Lorscheid and Lopez-Pena (torified schemes). This leads us to a specific subring of $\mathbb{W}(\Z)$, properly containing Almkvist's ring $\mathbb{W}_0(\Z)$, which  might be a natural  receptacle for all local factors of completed zeta functions.
\end{abstract}

\section{Introduction}

In \cite{Borger1} Jim Borger proposes to consider integral $\lambda$-rings as $\F_1$-algebras, with the $\lambda$-structure viewed as the descent data from $\Z$ to $\F_1$. Crucial is the fact that the functor of forgetting the $\lambda$-structure has the Witt-ring functor $\mathbb{W}(-)$ as its right adjoint. 

Recall that the $\lambda$-ring $\mathbb{W}(\Z) = 1 + t \Z [[t ]]$ has addition ordinary multiplication of power series, and a new multiplication induced functorially by demanding that $(1-mt)^{-1} \ast (1-nt)^{-1} = (1-mn t)^{-1}$. We will view $\mathbb{W}(\Z)$ as a  receptacle for motivic data, such as zeta-functions.

\vskip 3mm

A counting measure is a ringmorphism $\mu : K_0(\mathbf{Var}_{\Z}) \rTo \Z$, with $K_0(\mathbf{Var}_{\Z})$  the Grothendieck ring of schemes of finite type over $\Z$. A classic example being $\mu_{\F_p}([X]) = \# \overline{X}_p(\F_p)$ where $\overline{X}_p$ is the reduction of $X$ modulo $p$. The $\F_p$-counting measure $\mu_{\F_p}$ is {\em exponentiable} meaning that it defines a ringmorphism
\[
\zeta_{\F_p} : K_0(\mathbf{Var}_{\Z}) \rTo \mathbb{W}(\Z) \qquad [X] \mapsto \zeta_{\F_p}(\overline{X}_p,t) = exp(\sum_{r \geq 1} \# \overline{X}_p(\F_{p^r}) \frac{t^r}{r}) \]
and is {\em rational}, meaning that $\zeta_{\F_q}$ factors through the Almkvist subring $\mathbb{W}_0(\Z)$ of $\mathbb{W}(\Z)$, consisting of all rational functions.

\vskip 3mm

For a scheme $X$ of finite type over $\Z$, let  $N(x)$ for every closed point  $x \in | X |$  be the cardinality of the finite residue field at $x$, then the {\em Hasse-Weil zeta function} of $X$ decomposes as a product
\[
\zeta_X(s) = \prod_{x \in | X |} \frac{1}{(1-N(x)^{-s})} = \prod_p \zeta_{\F_p}(\overline{X}_p,p^{-s}) \]
over the non-archimedean local factors. If we take the product with the archimedean factors ($\Gamma$-factors) we obtain the completed zeta function $\hat{\zeta}_X(s)$.

\vskip 3mm

One of the original motivations for constructing $\F_1$-geometries was to understand these $\Gamma$-factors, see the lecture notes \cite{ManinZeta2} by Yuri I. Manin. For example, Manin conjectured that  Deninger's $\Gamma$-factor $\prod_{n \geq 0} \tfrac{s-n}{2 \pi}$ of $\overline{\mathbf{Spec}(\Z)}$ at complex infinity should be the zeta function of (the dual of) infinite dimensional projective space $\mathbb{P}^{\infty}_{\F_1}$, see \cite[4.3]{ManinNumbers} and \cite[Intro]{ManinZeta}. 

\vskip 3mm

As a step towards this conjecture, we proposed in \cite{LBrecursive} to consider integral bi-rings as $\F_1$-algebras, this time with the co-ring structure as the descent data from $\Z$ to $\F_1$. Here again, the forgetful functor has a right adjoint with assigns to $\Z$ the bi-ring $\mathbb{L}(\Z)$ of all integral recursive sequences equipped with the Hadamard product. These two approaches to $\F_1$-geometry are related, that is, we have a commuting diagram of (solid) ringmorphisms (dashed morphisms are explained below)
\[
\xymatrix{& \mathbb{W}_0(\Z)   \ar[rd] & & \\
& & \mathbb{M}(\Z) \ar[r] \ar[d] & \mathbb{W}(\Z) \ar[d]^{\mathghost} \\
\Z[\mathbb{L}] \ar@{.>}[ruu]^{\zeta_{\F_p}} \ar@{.>}[rru]_{\zeta_{\F_1}} & & \mathbb{L}(\Z) \ar[r] _i & \Z^{\infty}} \]
with the ghost-map $\mathghost = t \tfrac{d}{dt} log(-)$ and $\mathbb{M}(\Z)$ the pull-back of $\mathghost$ and the natural inclusion map $i$. One might speculate that the relevant counting measures $\mu : K_0(\mathbf{Var}_{\Z}) \rTo \Z$ are those which determine a ring-morphism $\zeta_{\mu} : K_0(\mathbf{Var}_{\Z}) \rTo \mathbb{M}(\Z)$, with those factoring over $\mathbb{W}_0(\Z)$ corresponding to the non-archimedean factors, and the remaining ones related to the $\Gamma$-factors.

\vskip 3mm

This is motivated by our description of the $\F_1$-zeta function of Lieber, Manin and Marcolli in \cite{LieberManinMarcolli}. Here, one considers integral schemes with a decomposition into tori $\mathbb{G}_m^n$ as $\F_1$-varieties and with morphisms respecting the decomposition and with all restrictions to tori being morphisms of group schemes. The corresponding Grothendieck ring $K_0(\mathbf{Var}_{\F_1}^{tor})$ can then be identified with the subring $\Z[\mathbb{L}]$ of $K_0(\mathbf{Var}_{\C})$. Kapranov's motivic zeta function induces a natural $\lambda$-ring structure on $\Z[\mathbb{L}]$ and we can also define a bi-ring structure on it by taking $\mathbb{D}=\mathbb{L}-2$ to be a primitive generator. By right adjointness we then have natural one-to-one correspondences
\[
\mathbf{comm}^+_{bi}(\Z[\mathbb{L}],\mathbb{L}(\Z)) \leftrightarrow \mathbf{comm}(\Z[\mathbb{L}],\Z) \leftrightarrow \mathbf{comm}^+_{\lambda}(\Z[\mathbb{L}],\mathbb{W}(\Z)) \]
To a counting measure $\mathbb{L} \mapsto m$ corresponds a $\lambda$-ring morphism $\zeta_m : \Z[\mathbb{L}] \rTo \mathbb{W}(\Z)$ which factors through $\mathbb{W}_0(\Z)$ and coincides with $\zeta_{\F_p}$ when $m=p$. If $X$ is an integral scheme with toric decomposition, its $\F_1$-zeta function is defined to be the ringmorphism
\[
\zeta_{\F_1} : \Z[\mathbb{L}] \rTo \mathbb{W}(\Z) \qquad \zeta_{\F_1}(X,t) = exp(\sum_{r \geq 1} \# X(\F_{1^m}) \frac{t^r}{r}) \]
with $\# X(\F_{1^m})$ being the total number of $m$-th roots of unity in the tori making up $X$, see \cite{LieberManinMarcolli}. This $\zeta_{\F_1}$ is not a $\lambda$-ring morphism and does not factor through $\mathbb{W}_0(\Z)$. However, the counting measure $\mathbb{L} \mapsto 3$ corresponds to a bi-ring morphism $c_3 : \Z[\mathbb{L}] \rTo \mathbb{L}(\Z)$ which factors through $\mathbb{M}(\Z)$ and such that the composition with $\mathbb{M}(\Z) \rTo \mathbb{W}(\Z)$ is the zeta-morphism $\zeta_{\F_1}$.

\vskip 3mm

\subsection{Structure of this paper} 

In section~\ref{measures}
we use right adjointness of the functor $\mathbb{W}(-)$ to give quick proofs of the facts that  the pre $\lambda$-structure on $K_0(\mathbf{Var}_{\C})$ given by Kapranov's motivic zeta function does not define a $\lambda$-ring structure, and that its universal motivic measure is not exponentiable.

In section~\ref{BC} we relate the versions of $\F_1$-geometry determined by $\lambda$-rings resp. bi-rings to the concrete resp. abstract Bost-Connes systems associated to cyclotomic Bost-Connes data as in \cite{MarTab}. This allows to have relative versions of $\mathbb{W}_0(\Z)$ and $\mathbb{L}(\Z)$ by imposing conditions on the eigenvalues of actions of Frobenii on (co)homology or on the roots and poles of zeta-polynomials.

In section~\ref{F1} we study counting measures on the Grothendieck ring of torified integral schemes, proving the results mentioned above. It turns out that the pull-back $\mathbb{M}(\Z)$ of $\mathbb{W}(\Z)$ and $\mathbb{L}(\Z)$ might be the appropriate  receptacle for local factors of zeta functions of integral schemes. These results can be extended to other subrings of $K_0(\mathbf{Var}_{\Z})$ which are $\lambda$-rings and admit a bi-ring structure.

In section~\ref{canonical} we introduce the category of all linear dynamical systems which plays the same role for $\mathbb{L}(\Z)$ as does the endomorphism category for $\mathbb{W}_0(\Z)$. To completely reachable systems we associate their transfer functions which are strictly proper rational functions. As such, these systems may be relevant in the study of zeta-polynomials, as introduced by Manin in \cite{ManinZeta}.

\vskip 3mm

{\bf Acknowledgements} This paper owes much to recent work of Yuri I. Manin, Matilde Marcolli and co-authors, \cite{ManinMarcolli},\cite{LieberManinMarcolli} and \cite{MarTab}. Unconventional symbols are taken from the \LaTeX-package {\tt halloweenmath} \cite{Mezzetti}, befitting the current topic.

\section{Motivic measures on $K_0(\mathbf{Var}_k)$} \label{measures}

Let $\mathbf{Var}_k$ be the category of varieties over a field $k$. The Grothendieck ring $K_0(\mathbf{Var}_k)$ is the quotient of the free abelian group on isomorphism classes $[X]$ of varieties by the relations $[X]=[Y]+[X-Y]$ whenever $Y$ is a closed subvariety of $X$, and multiplication is induced by products of varieties, that is, $[X].[Y]=[X \times Y]$. As the structure of $K_0(\mathbf{Var}_k)$ is fairly mysterious, we try to probe its properties via motivic measures.

\begin{definition}
A {\em motivic measure on $K_0(\mathbf{Var}_k)$} with values in a commutative ring $R$ is a ringmorphism 
\[
\mu : K_0(\mathbf{Var}_k) \rTo R \]
\end{definition}

The archetypical example of a motivic measure on the Grothendieck ring of varieties over a finite field $\F_q$ is the {\em counting measure} with values in $\Z$
\[
\mu_{\F_q} : K_0(\mathbf{Var}_{\F_q}) \rTo \Z \qquad [X] \mapsto \# X(\F_q) \]
An example of a motivic measure on the Grothendieck ring of complex varieties $K_0(\mathbf{Var}_{\C})$ with values in $\Z$ is the {\em Euler characteristic measure}
\[
\chi_c : K_0(\mathbf{Var}_{\C}) \rTo \Z \qquad [X] \mapsto \chi_c(X) = \sum_i (-1)^i dim_{\Q}~H^i_c(X^{an},\Q) \]

\vskip 3mm

There are plenty of motivic measures with values in other rings such as the {\em Hodge characteristic measure} $\mu_H$ with values in $\Z[u,v]$, see \cite[\S 4.1]{LiuSebag},  the {\em Poincar\'e characteristic measure} $P_X$ with values in $\Z[u]$, see \cite[\S 4.1]{LiuSebag}, the {\em Gillet-Soul\'e measure} $\mu_{GS}$ with values in the Grothendieck ring if Chow motives, see \cite{GilletSoule}.

\vskip 3mm

Of particular importance to us are the 'exotic' Larsen-Luntz measure $\mu_{LL}$ on $K_0(\mathbf{Var}_{\C})$ with values in the quotient field of the monoid ring $\Z[C]$ with $C$ the multiplicative monoid of polynomials in $\Z[t]$ with positive leading coefficient, see \cite{LarsenLuntz}, and  the {\em universal motivic measure}, which is the identity morphism $id : K_0(\mathbf{Var}_k) \rTo K_0(\mathbf{Var}_k)$.

\vskip 3mm

For a commutative ring $R$ let $\mathbb{W}(R)$ be the set $1 + tR[[t]]$ of all formal power series over $R$ with constant term equal to one, and let multiplication of formal power series be the {\em addition} on $\mathbb{W}(R)$. We say that $R$ admits a {\em pre $\lambda$-structure} if there exists a morphism of additive groups
\[
\lambda_t : R \rTo \mathbb{W}(R)=1+tR[[t]] \qquad a \mapsto \lambda_t(a) = 1 + at + \hdots = \sum_{m \geq 0} \lambda^m(a) t^m \]
that is, it satisfies $\lambda_0(a)=1, \lambda_1(a)=a$, and
\[
\lambda_t(a+b) = \lambda_t(a).\lambda_t(b) \quad \text{that is} \quad \lambda^m(a+b) = \sum_{i+j=m} \lambda^i(a)\lambda^j(b) \]
Given a pre $\lambda$-structure $\lambda_t$ on $R$ we can define the {\em Adams operations} $\Psi_m$ on $R$ via
\[
t \frac{d}{dt}~log(\lambda_t(a)) = t \frac{1}{\lambda_t(a)} \frac{d \lambda_t(a)}{dt} = \sum_{m \geq 1} \Psi_m(a) t^m \]
and note that for all $m \in \N$ and all $a,b \in R$ we have  $\Psi_m(a+b)=\Psi_m(a) + \Psi_m(b)$. We say that a pre $\lambda$-ring $R$ is a {\em $\lambda$-ring} if for all $m,n \in \N$ we have these conditions on the Adams operations
\[
\Psi_m(a.b) = \Psi_m(a).\Psi_m(b) \quad \text{and} \quad \Psi_m \circ \Psi_n = \Psi_n \circ \Psi_m \]
Equivalently, if we define a multiplication $\ast$ on $\mathbb{W}(R)$ induced by the functorial requirement that $(1-at)^{-1} \ast (1-bt)^{-1} = (1-abt)^{-1}$ for all $a,b \in R$, then the map $\lambda_t$ is a morphism of rings. For more on $\lambda$-rings, see \cite{Hazewinkel}, \cite{Knutson} and \cite{Wilkerson}.

\vskip 3mm

A morphism $\phi : (R,\lambda_t) \rTo (R',\lambda'_t)$ between two $\lambda$-rings is a ringmorphism such that for all $a \in R$ we have that $\lambda'_t(\phi(a)) = \mathbb{W}(\phi)(\lambda_t(a))$ where $\mathbb{W}(\phi)$ is the map on $\mathbb{W}(R)=1+tR[[t]]$ induced by $\phi$. With $\mathbf{comm}^+_{\lambda}$ we will denote the category of all (commutative) $\lambda$-rings. If $\mathbf{comm}$ is the category of all commutative rings, then
\[
\mathbb{W}~:~\mathbf{comm} \rTo \mathbf{comm}^+_{\lambda} \qquad A \mapsto \mathbb{W}(A) \]
is a functor, which is right adjoint to the forgetful functor $F : \mathbf{comm}^+_{\lambda} \rTo \mathbf{comm}$. That is, for every $\lambda$-ring $(R,\lambda_t)$ and every commutative ring $A$ we have a natural one-to-one corespondence
\[
\mathbf{comm}^+_{\lambda}(R,\mathbb{W}(A)) \leftrightarrow \mathbf{comm}(R,A) \qquad \phi \leftrightarrow \mathghost_1 \circ \phi \]
with the {\em ghost components} $\mathghost_m : \mathbb{W}(A) \rTo A$ defined by
\[
t \frac{1}{P} \frac{d P }{dt} = \sum_{m=1}^{\infty} \mathghost_m(P) t^m \qquad \text{for all $P \in \mathbb{W}(A)=1+tA[[t]] $} \]

\vskip 3mm

{\em Kapranov's motivic zeta function} $\zeta$ defines a natural pre $\lambda$-structure on $K_0(\mathbf{Var}_k)$
\[
\zeta : K_0(\mathbf{Var}_k) \rTo \mathbb{W}(K_0(\mathbf{Var}_k)) \quad [X] \mapsto \zeta_X(t) = 1 + [X]t+[S^2X]t^2 + [S^3X]t^3 + \hdots \]
where $S^nX = X^n/S_n$ is the $n$-th symmetric product of $X$.

\begin{definition} A motivic measure $\mu : K_0(\mathbf{Var}_k) \rTo R$ with values in $R$ is said to be {\em exponentiable} if the uniquely determined map $\zeta_{\mu} : K_0(\mathbf{Var}_k) \rTo \mathbb{W}(R)$ by
\[
\zeta_{\mu}([X]) = 1 + \mu([X])t + \mu([S^2 X])t^2 + \mu([S^3 X])t^3 + \hdots \]
is a ringmorphism.
\end{definition}

Again, the archetypical example being the counting measure $\mu_{\F_q}$ on $K_0(\mathbf{Var}_{\F_q})$ which is exponentiable, with corresponding zeta-function
\[
\zeta_{\mu_{\F_q}} : K_0(\mathbf{Var}_{\F_q}) \rTo \mathbb{W}(\Z) \qquad \zeta_{\mu_{\F_q}}([X]) = \sum_{m=1}^{\infty} \# X(\F_{q^m}) t^m = Z_{\F_q}(X,t) \]
the classical Hasse-Weil zeta function, see \cite[Prop. 8]{Naumann} or \cite[Thm. 2.1]{Ramachandran}. Also the Euler characteristic measure on $K_0(\mathbf{Var}_{\C})$ is exponentiable with corresponding zeta function
\[
\zeta_{\mu_c} : K_0(\mathbf{Var}_{\C}) \rTo \mathbb{W}(\Z) \qquad \zeta_{\mu_c}([X]) = \frac{1}{(1-t)^{\chi_c(X)}} \]
However, as shown in \cite[\S 4]{RamaTabu} the Larsen-Luntz motivic measure $\mu_{LL}$ on $K_0(\mathbf{Var}_{\C})$ is {\em not} exponentiable. For this would imply that
\[
\zeta_{\mu_{LL}}(C_1 \times C_2) = \zeta_{\mu_{LL}}(C_1) \ast \zeta_{\mu_{LL}}(C_2) \]
for any pair of projective curves $C_1$ and $C_2$. Kapranov proved that $\zeta_{\mu}(C)$ is a rational function for every curve and every motivic measure, which would imply that $\mu_{LL}(C_1 \times C_2)$ would be rational too, by \cite[Prop. 4.3]{RamaTabu}, which contradicts \cite[Thm 7.6]{LarsenLuntz} in case $C_1$ and $C_2$ have genus $\geq 1$.

\vskip 3mm

 It is a natural to ask whether the pre $\lambda$-structure on $K_0(\mathbf{Var}_k)$ defined by Kapranov's motivic zeta function defines a $\lambda$-ring structure on $K_0(\mathbf{Var}_k)$, see \cite[\S 3 Questions]{Ramachandran} or \cite[\S 2.2]{Gorsky}. The following is well-known to the experts, but we cannot resist including the short proof.

\begin{proposition} If Kapranov's motivic zeta function makes $K_0(\mathbf{Var}_k)$ into a $\lambda$-ring, then every motivic measure 
\[
\mu : K_0(\mathbf{Var}_k) \rTo R \]
 is exponentiable. 
 
 As a consequence, Kapranov's zeta function does not define a $\lambda$-ring structure on  $K_0(\mathbf{Var}_{\C})$.
\end{proposition}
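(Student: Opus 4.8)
The first statement follows almost immediately from the adjunction stated in the excerpt, so I would spend most of the effort making the logical shape transparent. Suppose Kapranov's motivic zeta function $\zeta$ endows $K_0(\mathbf{Var}_k)$ with a genuine $\lambda$-ring structure. Let $\mu : K_0(\mathbf{Var}_k) \to R$ be an arbitrary motivic measure, i.e.\ a ringmorphism. By right adjointness of $\mathbb{W}(-)$, the composite $\mu$ corresponds under the natural bijection $\mathbf{comm}^+_{\lambda}(K_0(\mathbf{Var}_k),\mathbb{W}(R)) \leftrightarrow \mathbf{comm}(K_0(\mathbf{Var}_k),R)$ to a unique $\lambda$-ring morphism $\widetilde{\mu} : K_0(\mathbf{Var}_k) \to \mathbb{W}(R)$ with $\mathghost_1 \circ \widetilde{\mu} = \mu$. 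The content of the adjunction is that $\widetilde{\mu}$ is obtained by applying $\mathbb{W}(\mu)$ to $\zeta$, i.e.\ $\widetilde{\mu} = \mathbb{W}(\mu)\circ \zeta_{(-)}$, which on classes reads $\widetilde{\mu}([X]) = 1 + \mu([X])t + \mu([S^2X])t^2 + \cdots$. This is exactly the map $\zeta_{\mu}$ of the definition of exponentiability, and it is a ringmorphism because it is a $\lambda$-ring morphism (in particular a ringmorphism) into $\mathbb{W}(R)$ with its $\ast$-multiplication. Hence $\mu$ is exponentiable, proving the first assertion.

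For the consequence I would argue by contraposition, invoking the Larsen--Luntz example recalled just before the proposition. If Kapranov's zeta function made $K_0(\mathbf{Var}_{\C})$ into a $\lambda$-ring, then by the first part \emph{every} motivic measure on $K_0(\mathbf{Var}_{\C})$ would be exponentiable; in particular the Larsen--Luntz measure $\mu_{LL}$ would be. But the excerpt already records, citing \cite[\S 4]{RamaTabu}, that $\mu_{LL}$ is \emph{not} exponentiable — the obstruction being that exponentiability would force $\zeta_{\mu_{LL}}(C_1\times C_2) = \zeta_{\mu_{LL}}(C_1) \ast \zeta_{\mu_{LL}}(C_2)$, hence (via Kapranov's rationality of $\zeta_\mu(C)$ for curves and \cite[Prop. 4.3]{RamaTabu}) the rationality of $\mu_{LL}(C_1\times C_2)$, contradicting \cite[Thm 7.6]{LarsenLuntz} for curves of genus $\geq 1$. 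This contradiction shows Kapranov's zeta function does not define a $\lambda$-ring structure on $K_0(\mathbf{Var}_{\C})$.

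The only genuinely substantive point, and the one I would be careful to state precisely rather than gloss, is the identification in the first paragraph: why the adjoint transpose $\widetilde\mu$ of $\mu$ is literally the termwise application of $\mu$ to the coefficients of Kapranov's $\zeta_X(t)$. This is where one uses the explicit description of the adjunction — the unit $K_0(\mathbf{Var}_k) \to \mathbb{W}(K_0(\mathbf{Var}_k))$ is $\zeta$ itself (this is the whole reason Kapranov's construction gives a \emph{pre} $\lambda$-structure), so the transpose of $\mu$ is $\mathbb{W}(\mu)\circ\zeta$, and $\mathbb{W}(\mu)$ is by definition the coefficientwise map $1+\sum a_i t^i \mapsto 1+\sum \mu(a_i)t^i$. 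Everything else — that $\zeta_\mu$ being a $\lambda$-ring morphism implies it is a ringmorphism into $(\mathbb{W}(R),\ast)$, and that this is exactly the definition of exponentiability — is formal. I expect no real obstacle; the point of the proposition is precisely that the adjunction makes a folklore fact a one-line argument.
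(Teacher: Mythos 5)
Your proof is correct and follows the same route as the paper: right adjointness of $\mathbb{W}(-)$ produces a $\lambda$-ring morphism $\zeta_\mu$ from any motivic measure $\mu$, and contraposition with the non-exponentiability of the Larsen--Luntz measure gives the consequence. The one place you add content the paper leaves implicit is the identification of the adjoint transpose with the termwise map $\zeta_\mu([X]) = 1 + \mu([X])t + \mu([S^2X])t^2 + \cdots$, via the observation that the unit of the adjunction is Kapranov's $\zeta$ itself and the transpose is therefore $\mathbb{W}(\mu)\circ\zeta$; this is exactly the right thing to check and closes the only gap a skeptical reader might raise.
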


\begin{proof} If $K_0(\mathbf{Var}_k)$ is a $\lambda$-ring, then by right adjunction of $\mathbb{W}(-)$ with respect to the  forgetful functor, we have a natural one-to-one correspondence
\[
\mathbf{comm}(K_0(\mathbf{Var}_k),R) \leftrightarrow \mathbf{comm^+_{\lambda}}(K_0(\mathbf{Var}_k),\mathbb{W}(R)) \]
and under this correspondence the motivic measure $\mu$ maps to a unique  $\lambda$-ring morphism $\zeta_{\mu} : K_0(\mathbf{Var}_k) \rTo \mathbb{W}(R)$. 

Because  the  Larsen-Luntz motivic measure $\mu_{LL}$ on $K_0(\mathbf{Var}_{\C})$ is not exponentiable, it follows that $K_0(\mathbf{Var}_{\C})$ cannot be a $\lambda$-ring.
\end{proof}

Another immediate consequence is this negative answer to \cite[\S 3 Questions]{Ramachandran}.

\begin{proposition} The universal motivic measure on $K_0(\mathbf{Var}_{\C})$ is not exponentiable.
\end{proposition}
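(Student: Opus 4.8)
The plan is to exploit the fact that the universal motivic measure \emph{is} the identity $\id : K_0(\mathbf{Var}_{\C}) \rTo K_0(\mathbf{Var}_{\C})$, so that the map $\zeta_{\id}$ attached to it is nothing but Kapranov's motivic zeta function $\zeta$ itself: indeed $\zeta_{\id}([X]) = 1 + [X]t + [S^2X]t^2 + \hdots = \zeta_X(t)$. I would argue by contradiction, assuming $\id$ is exponentiable, i.e. that $\zeta = \zeta_{\id} : K_0(\mathbf{Var}_{\C}) \rTo \mathbb{W}(K_0(\mathbf{Var}_{\C}))$ is a ringmorphism.

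The key step is to observe that this single assumption forces \emph{every} motivic measure on $K_0(\mathbf{Var}_{\C})$ to be exponentiable. Let $\mu : K_0(\mathbf{Var}_{\C}) \rTo R$ be an arbitrary motivic measure. Since $\mathbb{W}(-)$ is a functor, the induced map $\mathbb{W}(\mu) : \mathbb{W}(K_0(\mathbf{Var}_{\C})) \rTo \mathbb{W}(R)$ is a ringmorphism for the $(\cdot,\ast)$-structures, and as it acts coefficientwise it satisfies
\[
\mathbb{W}(\mu)\big(\zeta_X(t)\big) = \mathbb{W}(\mu)\Big(\textstyle\sum_{n \geq 0} [S^n X]\, t^n\Big) = \sum_{n \geq 0} \mu([S^n X])\, t^n = \zeta_{\mu}([X]).
\]
Hence $\zeta_{\mu} = \mathbb{W}(\mu) \circ \zeta_{\id}$ is a composition of ringmorphisms, so $\zeta_{\mu}$ is itself a ringmorphism, that is, $\mu$ is exponentiable.

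To finish, I would apply this to the Larsen--Luntz measure $\mu_{LL}$ on $K_0(\mathbf{Var}_{\C})$, which is \emph{not} exponentiable by \cite[\S 4]{RamaTabu} (contradicting \cite[Thm.~7.6]{LarsenLuntz}, as recalled above) --- a contradiction. Equivalently, one can route the argument through the previous Proposition: exponentiability of $\id$ says exactly that Kapranov's pre $\lambda$-structure $\zeta = \lambda_t$ is a ringmorphism into $\big(\mathbb{W}(K_0(\mathbf{Var}_{\C})),\cdot,\ast\big)$, which by the characterisation recalled in Section~\ref{measures} is precisely the statement that $K_0(\mathbf{Var}_{\C})$ is a $\lambda$-ring --- and we have just seen it is not.

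I do not expect a genuine obstacle here: the statement is a short formal consequence of functoriality of $\mathbb{W}(-)$ together with the imported non-exponentiability of $\mu_{LL}$ (equivalently, the failure of the $\lambda$-ring property of $K_0(\mathbf{Var}_{\C})$). The only point needing a little care is the identification $\zeta_{\mu} = \mathbb{W}(\mu)\circ \zeta_{\id}$ and the fact that $\mathbb{W}(\mu)$ is multiplicative for $\ast$, both of which are immediate once one recalls that $\mathbb{W}$ is a functor into $\mathbf{comm}$ (even into $\mathbf{comm}^+_{\lambda}$) and that $\mathbb{W}(\mu)$ is given coefficientwise by $\mu$.
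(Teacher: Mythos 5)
Your proof is correct and takes essentially the same route as the paper: assume the universal measure is exponentiable, post-compose the resulting ring morphism $\zeta = \zeta_{\id}$ with the functorially induced $\mathbb{W}(\mu)$ to conclude that \emph{every} measure $\mu$ is exponentiable, and derive a contradiction from the known non-exponentiability of $\mu_{LL}$. The only difference is that you spell out the coefficientwise identification $\zeta_{\mu} = \mathbb{W}(\mu)\circ\zeta_{\id}$ and offer an optional detour through the preceding proposition; the paper leaves these implicit.
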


\begin{proof} By functoriality, any motivic measure $\mu : K_0(\mathbf{Var}_{\C}) \rTo R$ gives rise to a morphism of $\lambda$-rings $\mathbb{W}(\mu) : \mathbb{W}(K_0(\mathbf{Var}_{\C})) \rTo \mathbb{W}(R)$. 

If the universal measure would be exponentiable, this would give a ringmorphism $\zeta : K_0(\mathbf{Var}_{\C}) \rTo \mathbb{W}(K_0(\mathbf{Var}_{\C}))$ and composition
\[
\mathbb{W}(\mu) \circ \zeta : K_0(\mathbf{Var}_{\C}) \rTo \mathbb{W}(R) \]
would then imply that $\mu$ is exponentiable, which cannot happen for $\mu_{LL}$.
\end{proof}

An important condition on a motivic measure $\mu : K_0(\mathbf{Var}_k) \rTo R$ is its {\em rationality}. In order to define this, we need to recall the {\em endomorphism category} and its Grothendieck ring, see \cite{Almkvist} and \cite{Grayson}.

\vskip 3mm

For a commutative ring $R$ consider the category $\mathcal{E}_R$ consisting of pairs $(E,f)$ where $E$ is a projective $R$-module of finite rank and $f$ is an endomorphism of $E$. Morphisms in $\mathcal{E}_R$ are module morphisms compatible with the endomorphisms. There is a duality $(E,f) \leftrightarrow (E^*,f^*)$ on $\mathcal{E}_R$ and we have $\oplus$ and $\otimes$ operations
\[
(E_1,f_1) \oplus (E_2,f_2) = (E_1 \oplus E_2,f_1 \oplus f_2) \quad (E_1,f_1) \otimes (E_2,f_2) = (E_1 \otimes E_2,f_1 \otimes f_2) \]
with a zero object $\mathbf{0} = (0,0)$ and a unit object $\mathbf{1}=(R,1)$. These operations
turn the Grothendieck ring $K_0(\mathcal{E}_R)$ into a commutative ring, having an ideal consisting of the pairs $(E,0)$, with quotient ring $\mathbb{W}_0(R)$.

The ring $\mathbb{W}_0(R)$ comes equipped with Frobenius ring endomorphisms $Fr_n(E,f)=(E,f^n)$, Verschiebung additive maps 
\[
V_n(E,f) = (E^{\oplus n},\begin{bmatrix} 0 & 0 & 0 & \hdots & 0 & f \\ 1 & 0 & 0 & \hdots & 0 & 0 \\ 0 & 1 & 0 & \hdots & 0 & 0 \\ \vdots & & \ddots & & & \vdots \\  \vdots & & &  \ddots & & \vdots \\ 0 & 0 & 0 & \hdots & 1 & 0 \end{bmatrix}) \]
and ghost ringmorphisms $\mathghost_n(E,f) = Tr(f^n) : \mathbb{W}_0(R) \rTo R$. For various relations among the maps $Fr_n,V_n$ and $\mathghost_n$ see for example \cite[Prop. 2.2]{ConnesConsaniBC}.

\vskip 3mm

The connection between Almkvist's functor $\mathbb{W}_0(-)$ and $\mathbb{W}(-)$ is given by the ringmorphisms
\[
L_R~:~\mathbb{W}_0(R) \rTo \mathbb{W}(R)  \qquad L_R(E,f) = \frac{1}{det(1 - t M_f)} \]
where $M_f$ is the matrix associated to $f$ (that is, if $f = \sum_i x_i^* \otimes x_i \in End_R(E) = E^* \otimes E$, then $M_f = (a_{ij})_{i,j}$ with $a_{ij} = x_i^*(x_j)$. By \cite[Thm 6.4]{Almkvist} we know that $L_R$ is injective with image all {\em rational} formal power series of the form
\[
\frac{1+a_1 t + \hdots + a_n t^n}{1 + b_1 t + \hdots + b_m t^m} \qquad a_i,b_i \in R, m,n \in \N_+  \]

\begin{definition}
We say that a motivic measure $\mu : K_0(\mathbf{Var}_k) \rTo R$ is {\em rational} if it is exponentiable and if the corresponding zeta-function $\zeta_{\mu}$ factors through $\mathbb{W}_0(RT)$. That is, there is a unique ringmorphism
\[
r_{\mu} : K_0(\mathbf{Var}_k) \rTo \mathbb{W}_0(R) \]
such that $\zeta_{\mu} = L_R \circ r_{\mu}$.
\end{definition}

By a classic result of Dwork we know that the counting measure $\mu_{\F_q}$ is rational, as is the Euler characteristic measure $\mu_c$.

\section{Cyclotomic Bost-Connes data} \label{BC}

Let $R$ be an integral domain with field of fractions $K$ of characteristic zero and with algebraic closure $\overline{K}$. Let $\overline{K}^*_{\times}$ be the multiplicative group of all non-zero elements and $\pmb{\mu}_{\infty}$ the subgroup consisting of all roots of unity. The power maps $\sigma_n : x \mapsto x^n$ for $n \in \N_+$ form a commuting family of endomorphisms of $\overline{K}^*_{\times}$ and its subgroups. Following M. Marcolli en G. Tabuada in \cite{MarTab} we define:

\begin{definition} A {\em cyclotomic Bost-Connes datum} is a divisible subgroup $\Sigma$
\[
\pmb{\mu}_{\infty} \subseteq \Sigma \subseteq \overline{K}^*_{\times} \]
stable under the action of the Galois group $G=Gal(\overline{K}/K)$.
\end{definition}

The subgroup $\Sigma$ should be considered as 'generalised' Weil numbers (recall that for each prime power $q=p^r$ the Weil $q$-numbers are an instance, see \cite[Example 4]{MarTab}). 

Observe that cyclotomic Bost-Connes data are special cases of {\em concrete} Bost-Connes data as in \cite[Def. 2.3]{MarTab} with the endomorphisms $\sigma_n$ the $n$-th power maps $\sigma_n(x)=x^n$ and $\rho_n(x) = \pmb{\mu}_n \sqrt[n]{x} \subset \Sigma$. In \cite[\S 4]{MarTab} Marcolli and Tabuada associate to a cyclotomic Bost-Connes system with $\overline{K}=\overline{\Q}$ a quantum statistical mechanical system.
Further, in \cite[\S 2]{MarTab} both {\em concrete} and {\em abstract} Bost-Connes systems are associated to a cyclotomic Bost-Connes datum $\Sigma$. We will relate these  to  $\F_1$-geometries. 

\vskip 3mm

A powerful idea, due to Jim Borger \cite{Borger1} and \cite{Borger2}, to construct 'geometries' under $\mathbf{Spec}(\Z)$ is to consider a subcategory $\mathbf{comm^+_X}$ of commutative rings $\mathbf{comm}$ which allows a right adjoint $R$ to the forgetful functor $F : \mathbf{comm^+_X} \rTo \mathbf{comm}$. 

The additional structure $\mathbf{X}$ should be thought of as descent data from $\Z$ to $\F_1$, the elusive field with one element. As a consequence, the commutative ring $F(R(\Z))$ can then be considered to be the coordinate ring of the arithmetic square $\mathbf{Spec}(\Z) \times_{\mathbf{Spec}(\F_1)} \mathbf{Spec}(\Z)$. 

We propose to view the object $R(\Z) \in \mathbf{comm^+_X}$ as a  receptacle for motivic data. That is, (co)homology groups with actions of Frobenii and zeta-functions  determine elements in $R(\Z)$ and the subobject in $\mathbf{comm^+_X}$ they generate can then be seen as its representative  in the corresponding version of $\F_1$-geometry.

\subsection{Concrete Bost-Connes systems and $\mathbf{comm^+_{\lambda}}$}

 Following \cite[Def. 2.6]{MarTab} one associates to $\Sigma$ the {\em concrete Bost-Connes system} which consists of the integral group ring $\Z[\Sigma]$ equipped with
\begin{enumerate}
\item{the induced $G=Gal(\overline{K}/K)$-action,}
\item{$G$-equivariant ring endomorphisms $\tilde{\sigma}_n$ induced by $\tilde{\sigma}_n(x)=x^n$ for all $x \in \Sigma$,}
\item{$G$-equivariant $\Z$-module maps $\tilde{\rho}_n$ induced by $\tilde{\rho}_n(x) = \sum_{x' \in \rho_n(x)} x'$ for all $x \in \Sigma$.}
\end{enumerate}

\begin{proposition} For a cyclotomic Bost-Connes datum $\Sigma$, the concrete Bost-Connes system $(\Z[\Sigma],\tilde{\sigma}_n,\tilde{\rho}_n)$ is a sub-system of $(\mathbb{W}_0(\overline{K}),Fr_n,V_n)$.
\end{proposition}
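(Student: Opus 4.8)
The plan is to write down an explicit injective morphism of Bost-Connes systems
\[
\Phi : \Z[\Sigma] \rTo \mathbb{W}_0(\overline{K})
\]
and to check that it intertwines all the structure. On the free $\Z$-basis $\Sigma$ of the integral group ring $\Z[\Sigma]$, set $\Phi(x) = [(\overline{K},x)]$, the class in $\mathbb{W}_0(\overline{K})$ of the rank one pair consisting of the module $\overline{K}$ together with the endomorphism ``multiplication by $x$'', and extend $\Z$-linearly. First I would verify $\Phi$ is a ringmorphism: additivity holds by construction, and for two group generators one has $(\overline{K},x) \otimes (\overline{K},y) = (\overline{K} \otimes_{\overline{K}} \overline{K}, x \otimes y) = (\overline{K},xy)$ in $\mathcal{E}_{\overline{K}}$, so $\Phi(x)\Phi(y) = \Phi(xy)$, while $\Phi(1) = [(\overline{K},1)] = \mathbf{1}$.

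For injectivity I would compose with the injective ringmorphism $L_{\overline{K}} : \mathbb{W}_0(\overline{K}) \rTo \mathbb{W}(\overline{K})$ of \cite[Thm 6.4]{Almkvist}. The composite sends a generator $x$ to $(1-xt)^{-1}$, so a combination $\sum_i n_i x_i$ with pairwise distinct $x_i \in \Sigma$ is sent to $\prod_i (1-x_i t)^{-n_i}$. Since the $x_i$ are distinct and nonzero, the degree one polynomials $1-x_i t$ are pairwise non-associate prime elements of the UFD $\overline{K}[t]$, so unique factorization in $\overline{K}(t)$ forces this product to equal $1$ only when every $n_i = 0$; hence $L_{\overline{K}} \circ \Phi$, and therefore $\Phi$, is injective.

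It remains to match the operators. For the Frobenii this is immediate on generators, $\Phi(\tilde{\sigma}_n(x)) = [(\overline{K},x^n)] = Fr_n(\Phi(x))$, and since both $\tilde{\sigma}_n$ and $Fr_n$ are additive we get $\Phi \circ \tilde{\sigma}_n = Fr_n \circ \Phi$. The compatibility of $\tilde{\rho}_n$ with the Verschiebung $V_n$ is the heart of the matter. On a generator $x$, divisibility of $\Sigma$ together with $\pmb{\mu}_n \subseteq \Sigma$ ensures that $\rho_n(x)$ consists of the $n$ distinct $n$-th roots of $x$, all lying in $\Sigma$, and $V_n(\Phi(x)) = [(\overline{K}^{\oplus n}, C_x)]$ with $C_x$ the displayed cyclic matrix, i.e. the companion matrix of $t^n - x$. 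Computing through $L_{\overline{K}}$,
\[
L_{\overline{K}}\big(V_n(\Phi(x))\big) = \frac{1}{\det(1 - t C_x)} = \frac{1}{1 - x t^n} = \prod_{x' \in \rho_n(x)} \frac{1}{1 - x' t} = L_{\overline{K}}\big(\Phi(\tilde{\rho}_n(x))\big),
\]
where the third equality is the factorization $1 - x t^n = \prod_{\zeta \in \pmb{\mu}_n}(1 - \zeta y t) = \prod_{x' \in \rho_n(x)}(1-x't)$ for any fixed $n$-th root $y$ of $x$. As $L_{\overline{K}}$ is injective, this yields $V_n(\Phi(x)) = \Phi(\tilde{\rho}_n(x))$, and additivity of both $V_n$ and $\tilde{\rho}_n$ propagates the identity to all of $\Z[\Sigma]$. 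Finally, $G = Gal(\overline{K}/K)$ acts on $\mathbb{W}_0(\overline{K})$ functorially through its action on $\overline{K}$, with $g \cdot [(\overline{K},x)] = [(\overline{K}, g(x))] = \Phi(g \cdot x)$, so $\Phi$ is $G$-equivariant as well. The only genuine content is thus the companion-matrix determinant computation combined with the injectivity of $L_{\overline{K}}$; everything else is formal bookkeeping.
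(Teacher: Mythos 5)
Your proof is correct, but it follows a genuinely different and more self-contained route than the paper. The paper simply invokes \cite[Prop. 2.3]{ConnesConsaniBC}, which identifies $\mathbb{W}_0(\overline{K})$ with the full group ring $\Z[\overline{K}^*_{\times}]$ via the ``divisor of nonzero eigenvalues'' map, and then observes that under this isomorphism $Fr_n$ and $V_n$ become exactly $\tilde{\sigma}_n$ and $\tilde{\rho}_n$ for the maximal datum $\overline{K}^*_{\times}$, so the statement follows by restriction to $\Sigma$. You instead build the embedding $\Phi$ explicitly on generators, deduce injectivity by pushing forward along Almkvist's $L_{\overline{K}}$ and using unique factorization in $\overline{K}[t]$, and verify compatibility with the Verschiebung by the companion-matrix determinant identity $\det(1-tC_x) = 1-xt^n = \prod_{x' \in \rho_n(x)}(1-x't)$. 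What your approach buys is that it avoids importing the Connes--Consani isomorphism as a black box and makes all the operator compatibilities transparent computations; what the paper's approach buys is brevity and the slightly stronger conceptual statement that $\mathbb{W}_0(\overline{K})$ is itself the concrete Bost--Connes system of the maximal cyclotomic datum $\overline{K}^*_\times$, of which $\Z[\Sigma]$ is then tautologically a subsystem. Both are sound; yours is, if anything, closer to a proof-from-scratch of the very identification the paper cites.
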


\begin{proof}
From \cite[Prop. 2.3]{ConnesConsaniBC} we recall that $\mathbb{W}_0(\overline{K})$ is isomorphic to the integral group ring $\Z[\overline{K}^*_{\times}]$ via the map which assigns to $(E,f)$ the divisor of non-zero eigenvalues of $f$ (with multiplicities). 

Under this isomorphism the Frobenius maps $Fr_n$ becomes $\tilde{\sigma}_n$ and the Verschiebung $V_n$ the map $\tilde{\rho}_n$ for the cyclotomic Bost-Connes datum $\overline{K}^*_{\times}$.
\end{proof}

\vskip 3mm

\begin{definition} For a cyclotomic Bost-Connes datum $\Sigma$, let
$\mathcal{E}_{\Sigma,R} $ be the full subcategory of $\mathcal{E}_R$ consisting of pairs $(E,f)$ with $E$ a projective $R$-module and $M_f$ a $\overline{K}$-diagonalisable matrix having all its eigenvalues in $\Sigma$.  With $\mathbb{W}_0(\Sigma,R)$ we denote the subring of $\mathbb{W}_0(R)$ generated by $\mathcal{E} _{\Sigma,R}$.\end{definition}

\begin{example} \label{MorseSmale}
Consider Yuri I. Manin's idea to replace the action of the Frobenius map on \'etale cohomology of an $\F_q$-variety at $q=1$ by pairs $(H_k(M,\Z),f_{\ast k})$ where $f_{\ast k}$ is the action of a Morse-Smale diffeomorphism $f$ on a compact manifold $M$ upon its homology $H_k(M,\Z)$,  \cite[\S 0.2]{ManinCyclotomy}. This implies that each $f_{\ast k}$ is quasi-unipotent, that is all its eigenvalues are roots of unity. This fits in with Manin's view that $1$-Frobenius morphisms acting upon their (co)homology have eigenvalues which are roots of unity.

In \cite[\S 2]{ManinMarcolli}, Manin and Matilde Marcolli assign an object in $\mathbf{comm^+_{\lambda}}$  to the Morse-Smale setting $(M,f)$ as follows. Each $H_k(M,\Z)$ is viewed as a $\Z[t,t^{-1}]$-module by letting $t$ act as $f_{\ast k}$. Next, they consider the minimal category $\mathcal{C}_M$ of $\Z[t,t^{-1}]$-modules, containing all $H_k(M,\Z)$, and closed with respect to direct sums, tensor products and exterior products. Then, its Grothendieck ring $K_0(\mathcal{C}_M)$ comes equipped with a $\lambda$-ring structure coming from the exterior products, which is
 then said to be the representative of $\{ (H_k(M,\Z),f_{\ast k});k \}$ in $\F_1$-geometry, see \cite[Def. 2.4.2]{ManinMarcolli}.

 Alternatively, one can assign to each $(H_k(M,\Z),f_{\ast k})$ the element 
\[
det(1-t (f_{\ast k} | H_k(M,\Z)))^{-1} \in 1 + t \Z[[t]] =  \mathbb{W}(\Z) \]
and consider the $\lambda$-subring of $\mathbb{W}(\Z)$ generated by these elements. Clearly, all $(H_k(M,\Z),f_{\ast k})$ lie in $\mathcal{E} _{\pmb{\mu}_{\infty},\Z}$.
\end{example}

\subsection{Abstract Bost-Connes systems and $\mathbf{comm^+_{bi}}$}

Following \cite[Def. 2.5]{MarTab} one can associate to a cyclotomic Bost-Connes datum $\Sigma$ the {\em abstract Bost-Connes system} which consists of the Galois-invariants of the group ring of $\Sigma$ over $\overline{K}$, that is,
\begin{enumerate}
\item{the $K$-algebra $\overline{K}[\Sigma]^{Gal(\overline{K}/K)}$, equipped with}
\item{$K$-algebra morphisms $\tilde{\sigma}_n$ induced by $x \mapsto x^n$ for all $x \in \Sigma$, and}
\item{$K$-linear maps $\tilde{\rho}_n$ induced by $x \mapsto_{x' \in \rho_n(x)} x'$ for all $x \in \Sigma$.}
\end{enumerate}
Clearly, $\overline{K}[\Sigma]^G$ is a Hopf-algebra and from \cite[Thm. 1.5.(iv)]{MarTab} we recall that the affine group $K$-scheme $\mathbf{Spec}(\overline{K}[\Sigma]^G)$ agrees with the Galois group of the neutral Tannakian category $\mathbf{Aut}^{\overline{K}}_{\Sigma}(\Q)$ consisting of pairs $(V,\Phi)$ with $V$ a finite dimensional $K$-vectorspace and
\[
\Phi : V \otimes \overline{K} \rTo V \otimes \overline{K} \]
a $G$-equivariant diagonalisable automorpism all of whose eigenvalues belong to $\Sigma$, that is, the category $\mathcal{E}_{\Sigma,K} $ introduced above.

\vskip 3mm

In \cite{LBrecursive} we proposed to consider the category $\mathbf{comm^+_{bi}}$ of all (torsion free) commutative and co-commutative $\Z$-birings. This time, the forgetful functor $F : \mathbf{comm^+_{bi}} \rTo \mathbf{comm}$ has as right adjoint $C(-)$ where $C(A)$ is the free co-commutative co-ring on $A$. In particular, $C(\Z)=\mathbb{L}(\Z)$, the coring of all integral linear recursive sequences, equipped with the Hadamard product, see \cite[Thm. 2]{LBrecursive}.

\vskip 3mm

For a commutative domain $R$, consider the polynomial ring $R[t]$ with coring structure defined by letting $t$ be a group-like element, that is, $\Delta(t)=t \otimes t$ and $\epsilon(t)=1$. 

The full linear dual $R[t]^{\ast}$ can be identified with the module of all infinite sequences $f=(f_n)_{n=0}^{\infty} \in R^{\infty}$ with $f(t^n) = f_n$. $\mathbb{L}(R)$ will be $R[t]^o$, that is, the submodule of all sequences $f$ such that $Ker(f) = (m(t))$ with $m(t) = t^r - a_1 t^{r-1} - \hdots - a_r$ is a monic polynomial. As $f(t^nm(t))=0$ it follows that $f$ is a linear recursive sequence, that is, for all $n \geq r$ we have 
$f_n = a_1 f_{n-1} + a_2 f_{n-2} + \hdots + a_r f_{n-r}$. Therefore,
\[
\mathbb{L}(R) =  R[t]^o = \underset{\rightarrow}{\text{lim}}~(\frac{R[t]}{(m(t))})^* \]
where the limit is taken over the multiplicative system of monic polynomials with coefficients in $R$. 

We define a coring structure on $\mathbb{L}(R)$ dual to the ring structure on $R[t]/(m(t))$. With this coring structure, $\mathbb{L}(R)$ becomes an integral biring if we equip $\mathbb{L}(R)$ with the Hadamard product of sequences, that is, componentwise multiplication $(f.g)_n = f_n.g_n$ and unit $1=(1,1,1,\hdots)$. 

\vskip 3mm

If $K$ is a field of characteric zero, one can describe the co-algebra structure on $\mathbb{L}(K)$  explicitly, see  \cite{PetersonTaft} for more details. 

On the linear recursive sequence $f = (f_i)_{i=0}^{\infty} \in K^{\infty}$ the counit acts as $\epsilon(f) = f_0$, projection on the first component. To define the co-multiplication recall that the {\em Hankel matrix} $M(f)$ of the sequence $f$ is the symmetric $k \times k$ matrix
\[
H(f) = \begin{bmatrix} f_0 & f_1 & f_2 & \hdots & f_{k-1} \\
f_1 & f_2 & f_3 & \hdots & f_k \\
f_2 & f_3 & f_4 &  \hdots & f_{k+1} \\
\vdots & \vdots & \vdots & & \vdots \\
f_{k-1} & f_k & f_{k+1} & \hdots & f_{2k-2} \end{bmatrix} \]
with $k$ maximal such that $H(f)$ is invertible. If $H(f)^{-1} = (s_{ij})_{i,j} \in M_n(K)$ then we have in $\mathbb{L}(K)$
\[
\Delta(f) = \sum_{i,j=0}^{k-1} s_{ij} (D^i f) \otimes (D^j f) \]
where $D$ is the shift operator $D(f_0,f_1,f_2,\hdots) = (f_1,f_2,\hdots)$.  Clearly, if $K$ is the fraction field of $R$, and if a sequence $f \in \mathbb{L}(R)$ has Hankel matrix $H(f)$ with determinant a unit in $R$, the same formula applies for $\Delta(f)$ as $\mathbb{L}(R)$ is a sub-biring of $\mathbb{L}(K)$. In general however, $\Delta(f)$ cannot be diagonalized in terms of $f,Df,D^2f, \hdots$ with $R$-coefficients and we have no other option to describe the comultiplication than as the direct limit of linear duals of the ringstructures on $R[t]/(m(t))$.

\vskip 3mm

\begin{proposition}
For a cyclotomic Bost-Connes datum $\Sigma$, the Hopf-algebra $\overline{K}[\Sigma]^G$ describing the abstract Bost-Connes system  is a sub-bialgebra of $\mathbb{L}(K)$.
\end{proposition}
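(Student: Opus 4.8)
The plan is to realise the group algebra $\overline{K}[\Sigma]$ inside $\mathbb{L}(\overline{K})$ by sending a group-like element $\sigma \in \Sigma$ to its sequence of powers, and then to descend to $G$-invariants. Concretely, I would define $\Phi : \overline{K}[\Sigma] \to \overline{K}^{\infty}$ to be the $\overline{K}$-linear map with $\Phi(\sigma) = (\sigma^n)_{n=0}^{\infty}$ for $\sigma \in \Sigma$; this is well defined since $\Sigma$ is a $\overline{K}$-basis of $\overline{K}[\Sigma]$. For pairwise distinct $\sigma_1,\dots,\sigma_m \in \Sigma$ the sequence $\Phi\big(\sum_i c_i\sigma_i\big) = \big(\sum_i c_i\sigma_i^n\big)_n$ is annihilated by $\prod_i(t-\sigma_i)\in\overline{K}[t]$, so $\Phi$ takes values in $\mathbb{L}(\overline{K}) = \overline{K}[t]^o$; and non-vanishing of the Vandermonde determinant of the $\sigma_i$ forces $\Phi$ to be injective.

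Next I would check that $\Phi$ is a morphism of $\overline{K}$-birings. Multiplicativity is immediate: the identity $(\sigma\tau)^n = \sigma^n\tau^n$ is exactly the Hadamard product on $\mathbb{L}(\overline{K})$, and $\Phi$ sends $1\in\Sigma$ to the constant sequence $(1,1,1,\dots)$, the unit of $\mathbb{L}(\overline{K})$. For the coring structure, note that for $f = \Phi(\sigma)$ the Hankel matrix $H(f)$ has entries $f_{i+j} = \sigma^{i+j}$ and hence rank one (as $\sigma\neq 0$); the maximal invertible Hankel block is therefore the $1\times 1$ matrix $[\,f_0\,] = [\,1\,]$, and the explicit comultiplication formula recalled above yields $\Delta(f) = f\otimes f$ and $\epsilon(f) = f_0 = 1$, so $\Phi(\sigma)$ is group-like. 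Since the comultiplication of $\mathbb{L}(\overline{K})$ is $\overline{K}$-linear and $\Delta(\sigma) = \sigma\otimes\sigma$ in $\overline{K}[\Sigma]$, linear extension shows that $\Phi$ intertwines the two comultiplications; thus $\Phi$ is an injective morphism of $\overline{K}$-birings onto a sub-biring of $\mathbb{L}(\overline{K})$. (Restricted to $\Z[\overline{K}^*_{\times}]\cong \mathbb{W}_0(\overline{K})$ this $\Phi$ is just the total ghost map $(E,f)\mapsto(Tr(f^n))_n$, in the spirit of the earlier proposition.)

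Finally I would take $G$-invariants. The group $G$ acts on $\overline{K}[\Sigma]$ by $g\cdot\big(\sum_i c_i\sigma_i\big) = \sum_i g(c_i)\,g(\sigma_i)$, using that $\Sigma$ is $G$-stable, and on $\mathbb{L}(\overline{K})\subseteq\overline{K}^{\infty}$ entrywise; the identity $\Phi(g\cdot\xi)_n = \sum_i g(c_i)g(\sigma_i)^n = g\big(\Phi(\xi)_n\big)$ shows $\Phi$ is $G$-equivariant, so $\Phi\big(\overline{K}[\Sigma]^G\big)\subseteq \mathbb{L}(\overline{K})^G$. Since $K$ has characteristic zero we have $\overline{K}^G = K$, so a $G$-invariant sequence in $\mathbb{L}(\overline{K})$ has all its entries in $K$; and a linear recursive sequence over $\overline{K}$ with values in $K$ is already linear recursive over $K$, because its annihilator ideal in $\overline{K}[t]$ is obtained from the one in $K[t]$ by extension of scalars. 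Hence $\mathbb{L}(\overline{K})^G = \mathbb{L}(K)$, and $\Phi$ restricts to an injective morphism of $K$-birings $\overline{K}[\Sigma]^G\hookrightarrow\mathbb{L}(K)$, which is the desired identification of the abstract Bost-Connes bialgebra with a sub-bialgebra of $\mathbb{L}(K)$.

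The step I expect to need the most care is this last identification at the level of comultiplications: one must verify that the coring structure on $\mathbb{L}(K)$ — defined as the direct limit of the linear duals of the $K$-algebras $K[t]/(m(t))$ — is the restriction, along $\mathbb{L}(K)\hookrightarrow\mathbb{L}(\overline{K})$, of the coring structure on $\mathbb{L}(\overline{K})$. Equivalently, one needs the canonical isomorphisms $\big(\mathbb{L}(\overline{K})\otimes_{\overline{K}}\mathbb{L}(\overline{K})\big)^G\cong\mathbb{L}(K)\otimes_K\mathbb{L}(K)$ and $\big(\overline{K}[\Sigma]\otimes_{\overline{K}}\overline{K}[\Sigma]\big)^G\cong\overline{K}[\Sigma]^G\otimes_K\overline{K}[\Sigma]^G$, which is Galois descent for the (ind-)Galois extension $\overline{K}/K$, together with the compatibility $K[t]/(m(t))\otimes_K\overline{K}\cong\overline{K}[t]/(m(t))$ for $m(t)\in K[t]$ and the fact that $K$-linear duality commutes with base change on finite free modules. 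Granting this bookkeeping, the pieces above assemble into the claim.
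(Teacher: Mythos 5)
Your proof is correct, and it is in essence a hands-on version of the paper's argument, which is more structural. You explicitly build the embedding $\Phi:\overline{K}[\Sigma]\hookrightarrow\mathbb{L}(\overline{K})$, $\sigma\mapsto(1,\sigma,\sigma^2,\dots)$, and verify by direct computation that it is injective (Vandermonde), a ring map (Hadamard product), and a coring map (the rank-one Hankel matrix forces $\Delta(\Phi\sigma)=\Phi\sigma\otimes\Phi\sigma$); then you descend to $G$-invariants using $\overline{K}^G=K$ and Galois descent for annihilator ideals. The paper instead invokes the Larson--Taft structure theorem for commutative cocommutative Hopf algebras over $\overline{K}$ (characteristic zero, algebraically closed), which yields the global decomposition $\mathbb{L}(\overline{K})\simeq(\overline{K}[\overline{K}^*_{\times}]\otimes\overline{K}[d])\oplus T$ with $T$ the ideal of almost-everywhere-zero sequences; here the identification of group-likes with geometric sequences is exactly your map $\Phi$, and the inclusion $\Sigma\subseteq\overline{K}^*_{\times}$ plus $G$-invariance then gives the claim at once. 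So the crucial shared observation in both proofs is that the group-like elements of $\mathbb{L}(\overline{K})$ are precisely the geometric sequences $(1,s,s^2,\dots)$ for $s\in\overline{K}^*_{\times}$. What the paper's route buys is a complete structural description of $\mathbb{L}(\overline{K})$ (including the primitive $d$ and the ideal $T$), which is reused later in the article (e.g.\ in the proof of Proposition~\ref{nonrat}); what your route buys is self-containedness -- you avoid citing the structure theorem and the descent of comultiplication becomes a concrete computation with Hankel matrices and annihilator ideals rather than being absorbed into a classification result. Your flagged final point -- that the limit-defined coring on $\mathbb{L}(K)$ is the restriction of the one on $\mathbb{L}(\overline{K})$ -- is indeed the place where descent is doing real work, and your sketch (Galois descent of ideals plus base change of finite free modules and their duals) is the right argument; the paper's proof does not make this step explicit either, treating it as part of the assertion $\mathbb{L}(K)=\mathbb{L}(\overline{K})^G$.
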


\begin{proof}
We can describe the bialgebra $\mathbb{L}(\overline{K})$ of linear recursive sequences over $\overline{K}$ using the structural results for commutative and co-commutative Hopf algebras over an algebraically closed field of characteristic zero, see \cite{LarsonTaft}. 

Let $T$ be the set of all sequences over $\overline{K}$ which are zero almost everywhere, then $T$ is a bialgebra ideal in $\mathbb{L}(\overline{K})$ and we have a decomposition
\[
\mathbb{L}(\overline{K}) = \overline{K}[t]^o \simeq \overline{K}[t,t^{-1}]^o \oplus T \]
One verifies that in the Hopf-dual $\overline{K}[t,t^{-1}]^o$ the group of group-like elements is isomorphic to the multiplicative group $\overline{K}^*_{\times}$, with $s \in \overline{K}^*_{\times}$ corresponding to the geometric sequence $(1,s,s^2,s^3,\hdots)$. Further, there is a unique primitive element corresponding to the sequence $d=(0,1,2,3,\hdots)$. Then, the structural result implies that, as bialgebras, we have an isomorphism
\[
\mathbb{L}(\overline{K}) \simeq (\overline{K}[\overline{K}^*_{\times}] \otimes \overline{K}[d]) \oplus T \]
As the Galois group $G=Gal(\overline{K}/K)$ acts on this bialgebra and as $\mathbb{L}(K) = \mathbb{L}(\overline{K})^G$, the claim follows.
\end{proof}

\begin{example}
Continuing Example~\ref{MorseSmale} on Morse-Smale diffeomorphism, as anticipated in \cite[remark 2.4.3]{ManinMarcolli}, in the $\mathbf{comm^+_{bi}}$-proposal, one can associate to each $(H_k(M,\Z),f_{\ast k})$ the element
\[
(Tr(f_{\ast k} | H_k(M,\Z)),Tr(f_{\ast k} ^2| H_k(M,\Z)),Tr(f_{\ast k}^3 | H_k(M,\Z)),\hdots ) \in \mathbb{L}(\Z) \]
and considers the sub-biring of $\mathbb{L}(\Z)$ generated by these elements.
\end{example}

\subsection{Motivic measures and $\mathbb{L}(R)$} 

By taking the trace of the Cayley-Hamilton polynomial we have a ghost ringmorphism
$\mathghost : \mathbb{W}_0(R) \rTo \mathbb{L}(R)$
\[
 (E,f) \mapsto (\mathghost_1(E,f),\mathghost_2(E,f),\hdots ) = (Tr(M_f),Tr(M_f^2),\hdots ) \]
 Further, we have a traditional ghost morphism $\mathghost : \mathbb{W}(R) \rTo R^{\infty}$ determined by $t \tfrac{d}{dt} log(-)$ on $\mathbb{W}(R) = 1 + tR[[t]]$
 \[
 \mathghost(f(t)) = (a_1,a_2,\hdots ) \quad \text{where} \quad t \frac{d}{dt} log(f(t)) = \sum_{m=1}^{\infty} a_m t^m \]
 
\begin{proposition} Let $R$ be a commutative ring and $\mu : K_0(\mathbf{Var}_k) \rTo R$ a motivic measure.  The measure $\mu$ is exponentiable if there exists a ringmorphism $\zeta_{\mu}$, and is rational if there is a ringmorphism $r_{\mu}$, making the diagram below commute
\[
\xymatrix{ & K_0(\mathbf{Var}_k) \ar[r]^{\mu} \ar@{.>}[rd]^{\zeta_{\mu}} \ar@{.>}[d]_{r_{\mu}} & R  \\
\mathcal{E}_R \ar@{.>}[r] & \mathbb{W}_0(R) \ar[r]^{L_R} \ar[d]_{\mathghost} & \mathbb{W}(R) \ar[d]^{\mathghost} \\
\mathcal{S}^{cr}_R \ar@{.>>}[r]^{\mathbat} & \mathbb{L}(R) \ar[r]^i & R^{\infty}} \]
The left-most maps are additive and multiplicative from the endomorphism category, resp. the category of completely reachable systems, to be defined in \S~\ref{canonical}.
\end{proposition}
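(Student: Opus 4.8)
The plan is to read this Proposition as a translation of the Definitions of \emph{exponentiable} and \emph{rational} into the language of the displayed diagram, the only genuinely new input being the commutativity of the square formed by the two ghost morphisms and the inclusion $i$. First I would pin down which dotted arrows the diagram is asking for, so that ``a ring morphism making the diagram commute'' is faithful to the Definitions. Kapranov's zeta function is a well-defined morphism of additive groups $\zeta : K_0(\mathbf{Var}_k) \rTo \mathbb{W}(K_0(\mathbf{Var}_k))$, and $\mathbb{W}(-)$ applied to $\mu$ is a ring morphism $\mathbb{W}(\mu) : \mathbb{W}(K_0(\mathbf{Var}_k)) \rTo \mathbb{W}(R)$, so the composite $\zeta_{\mu} := \mathbb{W}(\mu)\circ\zeta$ is a well-defined \emph{additive} map which, unwound, sends $[X]$ to $1 + \mu([X])t + \mu([S^2X])t^2 + \hdots$ --- exactly the map in the Definition of exponentiability. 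As $\zeta$ is additive and $\mathbb{W}(\mu)$ a ring morphism, $\zeta_{\mu}$ is automatically additive, so ``$\zeta_{\mu}$ is a ring morphism'' reduces to multiplicativity of $\zeta_{\mu}$, which is precisely exponentiability of $\mu$; and reading off the coefficient of $t$ gives $\mathghost_1\circ\zeta_{\mu}=\mu$, so the top part of the diagram commutes automatically. This settles the first equivalence, with $\zeta_{\mu}$ forced to be $\mathbb{W}(\mu)\circ\zeta$. For the second, the Definition of \emph{rational} says exactly that $\mu$ is exponentiable and that $\zeta_{\mu}=L_R\circ r_{\mu}$ for a ring morphism $r_{\mu}:K_0(\mathbf{Var}_k)\rTo\mathbb{W}_0(R)$ --- which is the left triangle of the diagram --- and $r_{\mu}$ is unique since $L_R$ is injective by \cite[Thm 6.4]{Almkvist}.

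Next I would verify the right-hand square, i.e.\ $\mathghost\circ L_R = i\circ\mathghost$ as ring morphisms $\mathbb{W}_0(R)\rTo R^{\infty}$. For $(E,f)\in\mathbb{W}_0(R)$ with matrix $M=M_f$, using $(1-tM)^{-1}=\sum_{k\geq0}t^kM^k$ in $M_n(R[[t]])$ and Jacobi's formula for $\tfrac{d}{dt}\log\det$,
\[
t\frac{d}{dt}\log L_R(E,f) \;=\; -\,t\frac{d}{dt}\log\det(1-tM) \;=\; t\,Tr\big((1-tM)^{-1}M\big) \;=\; \sum_{m\geq1}Tr(M^m)\,t^m ,
\]
so the ghost components of $L_R(E,f)$ form the sequence $(Tr(M),Tr(M^2),\hdots)$, which is exactly $i(\mathghost(E,f))$; that this sequence lies in $\mathbb{L}(R)$ follows because, by Cayley--Hamilton, it is annihilated by the monic characteristic polynomial of $M$. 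I would emphasise that the identity holds over an arbitrary commutative ring $R$: once $t\tfrac{d}{dt}$ has been applied no denominators survive, equivalently the power sums $Tr(M^m)$ are integral polynomials in the entries of $M$. Compatibility of $\mathghost$ and $i$ with the ring structures is then immediate, the trace of a tensor power being a product of traces (matching the Hadamard product on $\mathbb{L}(R)\subseteq R^{\infty}$) and the big-Witt ghost $\mathghost:\mathbb{W}(R)\rTo R^{\infty}$ carrying the $\ast$-product to the componentwise product.

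Finally, the leftmost column, involving $\mathcal{E}_R$ and the category $\mathcal{S}^{cr}_R$ of completely reachable systems, is a forward reference to \S~\ref{canonical}: there the indicated maps are additive and multiplicative for the evident $\oplus/\otimes$ (resp.\ direct-sum/tensor) operations and commute with the surjections onto $\mathbb{W}_0(R)$ resp.\ $\mathbb{L}(R)$ by the very definition of those Grothendieck rings, so only a reminder is needed at this point. The main thing requiring care is the bookkeeping above --- making sure $\zeta_{\mu}$ and $r_{\mu}$ are \emph{canonically} determined (handled by $\zeta_{\mu}=\mathbb{W}(\mu)\circ\zeta$ and, via injectivity of $L_R$, by $r_{\mu}=L_R^{-1}\circ\zeta_{\mu}$ on the image) and checking the ghost square over a general base ring rather than over a $\mathbb{Q}$-algebra; with those points addressed, the Proposition is a direct unwinding of the two Definitions.
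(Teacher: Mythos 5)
Your proposal is correct and follows essentially the same route as the paper, which dispatches the Proposition with the one-line observation that it "follows from the definitions above and the fact that $\log(L_R(E,f))=\sum_{m\geq 1}Tr(M_f^m)\,\tfrac{t^m}{m}$." Your Jacobi-formula computation of $t\tfrac{d}{dt}\log\det(1-tM_f)^{-1}$ is just the derivation of that cited fact, and your remark that the resulting identity is valid over an arbitrary commutative ring (since $t\tfrac{d}{dt}\log$ clears denominators and the power sums $Tr(M^m)$ are integral in the entries of $M$) is a worthwhile precision the paper leaves tacit; the rest of what you write — $\zeta_{\mu}=\mathbb{W}(\mu)\circ\zeta$, uniqueness of $r_{\mu}$ from injectivity of $L_R$, Cayley–Hamilton putting $(Tr(M^m))_m$ in $\mathbb{L}(R)$, and the forward reference for the leftmost column — is the "follows from the definitions" part, spelled out.
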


\begin{proof} This follows from the definitions above and the fact that $log(L_R(E,f))=\sum_{m \geq 1} Tr(M_f^m) \tfrac{t^m}{m}$.
\end{proof}

\begin{example}
As a consequence, an exponentiable motivic measure $\mu$ assigns to a $k$-variety $X$ the element $\zeta_{\mu}([X]) \in \mathbb{W}(R)$, and a rational motivic measure $\mu$ assigns to $X$ elements $\mathghost(r_{\mu}([X])) \in \mathbb{L}(R)$ and $L_R(r_{\mu}([X])) \in \mathbb{W}(R)$.
\end{example}

\section{Motivic measures on $K_0(\mathbf{Var}^{tor}_{\F_1})$} \label{F1}

In this section we consider yet another approach to $\F_1$-geometry based on the notion of {\em torifications} as introduced by Lorscheid and Lopez Pena in \cite{LopezLorscheid} and generalized by Manin and Marcolli in \cite{ManinMarcolli0}.

A {\em torification} of a complex algebraic variety, defined over $\Z$, is a decomposition into algebraic tori
\[
X = \sqcup_{i \in I} T_i \quad \text{with} \quad T_i \simeq \mathbb{G}_m^{d_i} \]
We consider here {\em strong morphisms} between torified varieties (see \cite[\S 5.1]{LieberManinMarcolli} for weaker notions), that is a morphism of varieties, defined over $\Z$,
\[
f : X = \sqcup_{i \in I} T_i \rTo Y = \sqcup_{j \in J} T_j' \]
together with a map $h : I \rTo J$ of the indexing sets such that the restriction of $f$ to any torus
\[
f_i = f |_{T_i} : T_i \rTo T'_{h(i)} \]
is a morphism of algebraic groups. With $K_0(\mathbf{Var}_{\F_1}^{tor})$ we denote the Grothendieck ring generated by the strong isomorphism classes $[X=\sqcup_i T_i]$ of torified varieties, modulo the {\em scissor relations}
\[
[X = \sqcup_i T_i] = [Y=\sqcup_j T'_j] + [X \backslash Y=\sqcup_k T"_k] \]
whenever the decomposition in tori in the torifications of $Y$ and $X \backslash Y$ is a union of tori of the torification of $X$.
This condition is very strong and implies that the class of any torified variety in $K_o(\mathbf{Var}_{\F_1}^{tor})$ is of the form
\[
[X = \sqcup_i T_i] = \sum_{n \geq 0} a_n \mathbb{T}^n \quad \text{with $a_n \in \N_+$ and $\mathbb{T} = [\mathbb{G}_m] = \mathbb{L}-1 \in K_0(\mathbf{Var}_{\C})$} \]
That is,
\[
K_0(\mathbf{Var}_{\F_q}^{tor}) = \Z[\mathbb{T}] = \Z[\mathbb{L}] \subset K_0(\mathbf{Var}_{\C}) \]
with $\mathbb{L}=[\mathbb{A}^1]$ the Lefschetz motive. Whereas Kapranov's motivic zeta function does not make $K_0(\mathbf{Var}_{\C})$ into a $\lambda$-ring, it does define a $\lambda$-structure on certain subrings, including $\Z[\mathbb{L}]$, see \cite[\S 2.2 Example]{Gorsky}, with $S^n(\mathbb{L}) = \mathbb{L}^n$

\begin{proposition} \label{rat} Any motivic measure $\mu : K_0(\mathbf{Var}_{\F_1}^{tor}) \rTo R$ with values in a commutative ring $R$ is exponentiable and rational.
\end{proposition}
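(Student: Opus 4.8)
The plan is to use the structural facts just recalled: $K_0(\mathbf{Var}_{\F_1}^{tor}) = \Z[\mathbb{L}]$ is a polynomial ring, and on it Kapranov's zeta function genuinely defines a $\lambda$-ring structure, with structure map $\lambda_t(\mathbb{L}) = \zeta_{\mathbb{L}}(t) = 1 + \mathbb{L}t + \mathbb{L}^2 t^2 + \hdots = (1-\mathbb{L}t)^{-1}$, since $S^n(\mathbb{L}) = \mathbb{L}^n$ (cf.\ \cite[\S 2.2 Example]{Gorsky}). In contrast to the full ring $K_0(\mathbf{Var}_{\C})$, there is no Larsen-Luntz-type obstruction here, so exponentiability becomes a formal consequence of right adjointness, and rationality follows from a one-line computation on the single generator $\mathbb{L}$.

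First I would prove exponentiability by invoking the adjunction $\mathbf{comm}^+_{\lambda}(\Z[\mathbb{L}],\mathbb{W}(R)) \leftrightarrow \mathbf{comm}(\Z[\mathbb{L}],R)$ of Section~\ref{measures}. Since $\Z[\mathbb{L}]$ is a $\lambda$-ring, the motivic measure $\mu : \Z[\mathbb{L}] \to R$ corresponds to a unique $\lambda$-ring morphism $\Z[\mathbb{L}] \to \mathbb{W}(R)$, and by the standard formula for the adjoint this morphism is the composite $\mathbb{W}(\mu) \circ \lambda_t$, where $\lambda_t$ is the unit of the adjunction. As $\lambda_t$ is literally Kapranov's $\zeta$, this composite sends $[X]$ to $\mathbb{W}(\mu)(1 + [X]t + [S^2X]t^2 + \hdots) = 1 + \mu([X])t + \mu([S^2X])t^2 + \hdots$, which is exactly the map $\zeta_{\mu}$ from the definition of exponentiability; being a ringmorphism, it witnesses that $\mu$ is exponentiable.

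Next I would upgrade to rationality by evaluating $\zeta_{\mu}$ on the generator. Set $m = \mu(\mathbb{L}) \in R$; then $\zeta_{\mu}(\mathbb{L}) = \sum_{n \geq 0} \mu(S^n \mathbb{L})\, t^n = \sum_{n \geq 0} m^n t^n = (1-mt)^{-1}$, which equals $L_R((R,m))$ for the object $(R,m) \in \mathcal{E}_R$ given by the free rank-one module with endomorphism ``multiplication by $m$'' (so the associated matrix is $(m)$ and $\det(1-tM) = 1-mt$). Since $L_R : \mathbb{W}_0(R) \to \mathbb{W}(R)$ is an injective ringmorphism with image the rational power series (\cite[Thm 6.4]{Almkvist}), its image is a subring of $\mathbb{W}(R)$ containing $\zeta_{\mu}(\mathbb{L})$, hence --- because $\Z[\mathbb{L}]$ is generated as a ring by $\mathbb{L}$ --- containing the entire image of the ringmorphism $\zeta_{\mu}$. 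Therefore $\zeta_{\mu}$ factors uniquely as $L_R \circ r_{\mu}$, where $r_{\mu} : \Z[\mathbb{L}] \to \mathbb{W}_0(R)$ is the ringmorphism determined by $\mathbb{L} \mapsto (R,m)$; this is precisely what rationality demands.

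The argument is essentially bookkeeping, and the one point that deserves care is the identification, in the second step, of the $\lambda$-ring morphism produced by the adjunction with the naive map obtained by applying $\mu$ coefficientwise to Kapranov's zeta. This rests on the convention --- stressed in Section~\ref{measures} --- that the $\lambda$-structure map of $\Z[\mathbb{L}]$ is \emph{exactly} $\zeta$ (with $S^n\mathbb{L} = \mathbb{L}^n$), so that no translation between symmetric-power and exterior-power conventions intervenes; one should also verify that $\lambda_t$ is the unit of the adjunction, i.e.\ that $\mathghost_1 \circ \lambda_t = \id$ and that $\lambda_t$ is a $\lambda$-morphism, both of which are immediate from the definitions. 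Everything else --- that $\mathbb{W}_0(R)$ is a subring, that $L_R$ is injective with rational image, and that a ringmorphism out of a one-variable polynomial ring is pinned down by the image of the variable --- is quoted directly from the text above.
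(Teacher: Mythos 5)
Your proof is correct and takes essentially the same route as the paper: invoke right adjointness of $\mathbb{W}(-)$ to turn $\mu$ into the $\lambda$-ring morphism $\zeta_{\mu}$ (giving exponentiability), then compute $\zeta_{\mu}(\mathbb{L}) = (1-\mu(\mathbb{L})t)^{-1} = L_R((R,\mu(\mathbb{L})))$ and conclude that $\zeta_{\mu}$ factors through $\mathbb{W}_0(R)$ via $\mathbb{L}\mapsto (R,\mu(\mathbb{L}))$ (giving rationality). The only difference is cosmetic: you spell out the identification of the adjoint morphism with $\mathbb{W}(\mu)\circ\lambda_t$ and hence with the map from the definition of exponentiability, a check the paper leaves implicit.
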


\begin{proof} Because $K_0(\mathbf{Var}_{\F_1}^{tor}) = \Z[\mathbb{L}]$ is a $\lambda$-ring, we have by right adjointness of $\mathbb{W}(-)$ a natural one-to-one correspondence
\[
\mathbf{comm}(K_0(\mathbf{Var}_{\F_1}^{tor}),R) \leftrightarrow \mathbf{comm}^+_{\lambda}(K_0(\mathbf{Var}_{\F_1}^{tor}),\mathbb{W}(R)) \]
with $\mu$ corresponding to a unique $\lambda$-ring morphism
\[
\zeta_{\mu} : K_0(\mathbf{Var}_{\F_1}^{tor}) \rTo \mathbb{W}(R) = 1 + t R[[t]] \qquad \mathbb{L} \mapsto 1 + rt + r^2t^2+ \hdots = \frac{1}{1-rt} \]
with $r=\mu(\mathbb{L})$. That is, $\mu$ is exponentiable and rational as it factors through the ringmorphism $r_{\mu} : K_0(\mathbf{Var}_{\F_1}^{tor}) \rTo \mathbb{W}_0(R)$ defined by $\mathbb{L} \mapsto [R,r]$.
\end{proof}

If we equip $K_0(\mathbf{Var}_{\F_1}^{tor}) = \Z[\mathbb{L}]$ with the bi-ring structure induced by letting $\mathbb{L}$ be a group-like generator, that is $\Delta(\mathbb{L})= \mathbb{L} \otimes \mathbb{L}$ and $\epsilon(\mathbb{L})=1$, we have a bi-ring morphism $c_{\mu} : K_0(\mathbf{Var}_{\F_1}^{tor}) \rTo \mathbb{L}(R)$ defined by $\mathbb{L} \mapsto (1,r,r^2,\hdots)$ making the diagram below commutative.
\[
\xymatrix{K_0(\mathbf{Var}_{\F_1}^{tor}) \ar@/^3ex/[rrd]^{\zeta_{\mu}} \ar@/_4ex/[rdd]_{c_{\mu}} \ar[rd]|{r_{\mu}} & & \\
& \mathbb{W}_0(R) \ar[r]^{L_{\Z}} \ar[d]_{\mathghost} & \mathbb{W}(R) \ar[d]^{\mathghost} \\
& \mathbb{L}(R) \ar[r] & R^{\infty}} \]

For example, any motivic measure with values in $\Z$ is of the form
\[
\mu_m : K_0(\mathbf{Var}_{\F_1}^{tor}) = \Z[\mathbb{L}] \rTo \Z \qquad \mathbb{L} \mapsto m+1 \]
and if $m+1=p$ with $p$ a prime number, the corresponding zeta function $\zeta_{\mu_m}(X,t)$ coincides with the Hasse-Weil zeta function of the reduction mod $p$ of the torified variety $X$. The reason for choosing $m+1$ rather than $m$ will be explained in \ref{countingF1} below.

\vskip 3mm

Similarly, we can define {\em $\F_{1^m}$-varieties} to be torified varieties $X = \sqcup T_i$ with the natural action of the group of $m$-th roots of unity $\pmb{\mu}_m$ on each torus $T_i$. As a consequence we have
\[
K_0(\mathbf{Var}_{F_{1^m}}^{tor}) = \Z[\mathbb{T}] = \Z[\mathbb{L} ] \]
and the previous result holds also for $K_0(\mathbf{Var}_{\F_{1^m}}^{tor})$.

\vskip 3mm

\subsection{Counting $\F_{1^m}$-points} \label{countingF1}

The motivic measure $\mu_{2m}$ can be interpreted as a 'counting measure' associated to the $\F_1$-extension $\F_{1^m}$. 

Indeed, in  \cite[Lemma 5.6]{LieberManinMarcolli} Joshua Lieber, Yuri I. Manin and Matilde Marcolli define for a torified variety $X$ with Grothendieck class $[X]=\sum_{i=0}^N a_i \mathbb{T}^i \in K_0(\mathbf{Var}_{\F_1}^{tor})$ that
\[
\# X(\F_{1^m}) = \sum_{i=0}^N a_i m^i \]
That is, $\# X(\F_1)$ counts the number of tori in the torified variety $X$, and $\# X(\F_{1^m})$ counts the number of $m$-th roots of unity in the tori-decomposition of $X$. Therefore, $\mu_m = \mu_{\F_{1^m}}$.

\vskip 3mm

In analogy with this Hasse-Weil zeta function of varieties over $\F_q$, Lieber, Manin and Marcolli then define the {\em $\F_1$- zeta function} to be the ring morphism, by \cite[Prop. 6.2]{LieberManinMarcolli}
\[
\zeta_{\F_1}~:~K_0(\mathbf{Var}_{\F_1}^{tor} ) \rTo \mathbb{W}(\Z) \qquad [X]=\sum_{k=0}^N a_k \mathbb{T}^k \mapsto exp(\sum_{k=0}^N a_k Li_{1-k}(t) ) \]
where $Li_s(t)$ is the polylogarithm function, that is, $Li_{1-k}(t) = \sum_{l \geq 1} l^{k-1} t^l$. 
This gives us a motivic measure on  $K_0(\mathbf{Var}_{\F_{1}}^{tor})$ with values in $\mathbb{W}(\Z)$, but it does not correspond to any of the zeta-functions $\zeta_{\mu_k}$ corresponding to the motivic measure $\mu_k$. In particular, $\zeta_{\F_1}$ is {\em not} a morphism of $\lambda$-rings.

\vskip 3mm

Mutatis mutandis we can define similarly the $\F_{1^m}$-zeta function, for the field extension $\F_{1^m}$ of $\F_1$, to be the ring morphism
\[
\zeta_{\F_{1^m}}~:~K_0(\mathbf{Var}_{\F_{1^m}}^{tor} ) \rTo \mathbb{W}(\Z) \qquad [X]=\sum_{k=0}^N a_k \mathbb{T}^k \mapsto exp(\sum_{k=0}^N a_k m^k  Li_{1-k}(t) ) \]
and again, this zeta function does not come from any of the motivic measures $\mu_k$ on $K_0(\mathbf{Var}_{\F_{1^m}})$.

\vskip 3mm

However, we can define another bi-ring (actually, Hopf-ring) structure on $K_0(\mathbf{Var}_{\F_{1^m}}^{tor})= \Z[\mathbb{T}]$ induced by taking $\mathbb{D}=\mathbb{T}-m$ (observe that $\# \mathbb{D}(\F_{1^m}) = 0$) to be the primitive generator, that is,
\[
\Delta(\mathbb{D}) = \mathbb{D} \otimes 1 + 1 \otimes \mathbb{D} \quad \text{and} \quad \epsilon(\mathbb{D}) = 0 \]
We will call this the {\em Lie algebra structure} on $K_0(\mathbf{Var}_{\F_{1^m}}^{tor})$.

\begin{proposition} \label{nonrat} If we equip $K_0(\mathbf{Var}_{\F_{1^m}}^{tor})=\Z[\mathbb{T}]$ with the Lie-algebra structure, then under the natural one-to-one correspondence
\[
\mathbf{comm}(K_0(\mathbf{Var}_{\F_{1^m}}^{tor}),\Z) \leftrightarrow \mathbf{comm}^+_{bi}(K_0(\mathbf{Var}_{\F_{1^m}}^{tor}),\mathbb{L}(\Z)) \]
the motivic measure $\mu_{2m} : K_0(\mathbf{Var}_{\F_{1^m}}^{tor}) \rTo \Z$ corresponds to a unique bi-ring morphism $c_{\mu_{2m}} : K_0(\mathbf{Var}_{\F_{1^m}}^{tor}) \rTo \mathbb{L}(\Z)$, making the diagram below commutative
\[
\xymatrix{K_0(\mathbf{Var}_{\F_{1^m}}^{tor}) \ar@/^3ex/[rrd]^{\zeta_{\F_{1^m}}} \ar@/_4ex/[rdd]_{c_{\mu_{2m}}} \ar@{.>}[rd]|{\not\exists} & & \\
& \mathbb{W}_0(\Z) \ar[r]^{L_{\Z}} \ar[d]_{\mathghost} & \mathbb{W}(\Z) \ar[d]^{\mathghost} \\
& \mathbb{L}(\Z) \ar[r] & \Z^{\infty}} \]
\end{proposition}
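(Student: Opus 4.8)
The plan is to deduce the existence of $c_{\mu_{2m}}$ from right-adjointness of $\mathbb{L}(-)$, to make it explicit via the primitive element of $\mathbb{L}(\Z)$, and then to kill the dotted arrow by an essential-singularity argument. First I would record that $(\Z[\mathbb{T}],\text{Lie structure})$ is a bona fide object of $\mathbf{comm}^+_{bi}$: declaring $\mathbb{D}=\mathbb{T}-m$ primitive turns $\Z[\mathbb{T}]=\Z[\mathbb{D}]$ into the polynomial Hopf $\Z$-algebra on one primitive generator ($\Delta(\mathbb{D}^n)=\sum_{i}\binom{n}{i}\mathbb{D}^{i}\otimes\mathbb{D}^{n-i}$, $\epsilon(\mathbb{D}^n)=0$ for $n\geq 1$), which is torsion free, commutative and co-commutative. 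By \cite[Thm.\ 2]{LBrecursive} the forgetful functor $\mathbf{comm}^+_{bi}\to\mathbf{comm}$ has $C(-)=\mathbb{L}(-)$ as right adjoint, so the motivic measure $\mu_{2m}\colon\Z[\mathbb{T}]\to\Z$ (sending $\mathbb{T}\mapsto 2m$, equivalently $\mathbb{D}\mapsto m$) corresponds under the displayed bijection to a unique bi-ring morphism $c_{\mu_{2m}}\colon\Z[\mathbb{T}]\to\mathbb{L}(\Z)$.

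Next I would pin $c_{\mu_{2m}}$ down explicitly. A bi-ring morphism must carry the primitive $\mathbb{D}$ to a primitive element of $\mathbb{L}(\Z)$; but a primitive $f\in\mathbb{L}(\Z)=\Z[t]^o$ satisfies $f(t^{i+j})=f(t^i)+f(t^j)$, so $f_n=nf_1$ and the primitives of $\mathbb{L}(\Z)$ are precisely the integer multiples of $d=(0,1,2,3,\dots)$. Matching with $\mu_{2m}$ through the counit of the adjunction then forces $c_{\mu_{2m}}(\mathbb{D})$ to be the appropriate multiple of $d$, so that $c_{\mu_{2m}}(\mathbb{T})=c_{\mu_{2m}}(\mathbb{D})+m\cdot(1,1,1,\dots)$ is an arithmetic progression; since the ring structure on $\mathbb{L}(\Z)$ is the Hadamard product and its addition is componentwise, $c_{\mu_{2m}}([X])$ for $[X]=\sum_k a_k\mathbb{T}^k$ comes out to be the linear recursive sequence whose $\ell$-th entry records $\sum_k a_k(m\ell)^k=\# X(\F_{1^{\ell m}})$, i.e. $\bigl(\# X(\F_{1^{m}}),\# X(\F_{1^{2m}}),\# X(\F_{1^{3m}}),\dots\bigr)$. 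Commutativity of the outer square is then a direct computation: $\mathghost\bigl(\zeta_{\F_{1^m}}([X])\bigr)=t\tfrac{d}{dt}\log\zeta_{\F_{1^m}}([X])=\sum_k a_k m^k\,t\tfrac{d}{dt}Li_{1-k}(t)=\sum_k a_k m^k\sum_{\ell\geq1}\ell^{k}t^{\ell}=\sum_{\ell\geq1}\bigl(\sum_k a_k(m\ell)^k\bigr)t^{\ell}$, whose coefficient sequence is exactly $i\bigl(c_{\mu_{2m}}([X])\bigr)$; the remaining triangles of the diagram commute formally.

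The heart of the statement is that no ring morphism $r\colon\Z[\mathbb{T}]\to\mathbb{W}_0(\Z)$ can complete the diagram. Such an $r$ would satisfy $L_{\Z}\circ r=\zeta_{\F_{1^m}}$, so $\zeta_{\F_{1^m}}(\mathbb{T})=L_{\Z}(r(\mathbb{T}))$ would lie in the image of $L_{\Z}$, which by \cite[Thm.\ 6.4]{Almkvist} consists precisely of the rational power series. But $\zeta_{\F_{1^m}}(\mathbb{T})=\exp(m\,Li_0(t))=\exp\!\bigl(\tfrac{mt}{1-t}\bigr)$, and for $m\geq 1$ this is not rational: it is the exponential of a non-polynomial rational function, hence has an essential singularity at $t=1$ and is transcendental over $\Q(t)$. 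Equivalently, its ghost sequence is $(m,2m,3m,\dots)$ and $\sum_{\ell\geq1}m\ell\,t^{\ell}=\tfrac{mt}{(1-t)^2}$ has a double pole, whereas the ghost of any rational power series has generating series $\sum_j\tfrac{\beta_j t}{1-\beta_j t}-\sum_i\tfrac{\alpha_i t}{1-\alpha_i t}$ with only simple poles; this same obstruction shows $\mathghost\circ r=c_{\mu_{2m}}$ is impossible, since $\mathghost(r(\mathbb{T}))$ would be a sequence of traces of powers of one matrix and so could never be a non-constant arithmetic progression.

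I expect the main effort to be the bookkeeping of the second paragraph — fixing the normalization of the primitive $d$ and the exact form of the comparison maps $i$ and $\mathghost$ into $\Z^{\infty}$ so that the square commutes on the nose, consistently with the conventions of \S\ref{measures}--\S\ref{BC} — while the non-existence of the dotted arrow is the clean conceptual point and drops out of Almkvist's rationality theorem together with the essential singularity of $\exp\!\bigl(\tfrac{mt}{1-t}\bigr)$.
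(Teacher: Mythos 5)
Your proposal is correct, and for the construction of $c_{\mu_{2m}}$ it matches the paper's argument: you identify the primitives of $\mathbb{L}(\Z)$ as $\Z\cdot d$, force $c_{\mu_{2m}}(\mathbb{D})=m\cdot d$ so that $c_{\mu_{2m}}(\mathbb{T})=(m,2m,3m,\dots)$, and verify the outer square by computing $t\tfrac{d}{dt}\log\zeta_{\F_{1^m}}([X])$ termwise from the polylogarithms, exactly as the paper does.

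Where you genuinely diverge is in ruling out the dotted arrow $r\colon\Z[\mathbb{T}]\to\mathbb{W}_0(\Z)$. The paper works entirely on the $\mathbb{L}$-side: it uses functoriality to pass to the commuting square $\mathbb{W}_0(\Z)\to\mathbb{W}_0(\overline{\Q})\cong\Z[\overline{\Q}^{*}_{\times}]$ over $\mathbb{L}(\Z)\to\mathbb{L}(\overline{\Q})$ together with the Larson--Taft structure theorem $\mathbb{L}(\overline{\Q})\cong(\overline{\Q}[\overline{\Q}^{*}_{\times}]\otimes\overline{\Q}[d])\oplus T$, and notes that the ghost of any virtual endomorphism lands in the group-algebra part (a $\Z$-linear combination of group-like geometric sequences), so the primitive $m\cdot d$ cannot be hit. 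You instead give two equivalent analytic obstructions, one on each side of the diagram: on the $\mathbb{W}$-side, $\zeta_{\F_{1^m}}(\mathbb{T})=\exp\bigl(\tfrac{mt}{1-t}\bigr)$ has an essential singularity at $t=1$, hence is not a rational power series and cannot lie in $L_{\Z}(\mathbb{W}_0(\Z))$ by Almkvist's theorem; on the $\mathbb{L}$-side, the generating series $\tfrac{mt}{(1-t)^2}$ of $(m,2m,3m,\dots)$ has a double pole, whereas the ghost of any class in $\mathbb{W}_0(\Z)$ produces, after base change to $\overline{\Q}$, a $\Z$-linear combination of the simple-pole terms $\tfrac{\lambda t}{1-\lambda t}$. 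Your second argument is really the same obstruction as the paper's, translated from ``group-like versus primitive'' into ``simple pole versus double pole''; your first argument is more elementary still, bypassing the base change and the Hopf structure theorem entirely and using only Almkvist's characterisation of $\im(L_{\Z})$, at the small cost of invoking a transcendence/essential-singularity fact about $\exp\bigl(\tfrac{mt}{1-t}\bigr)$. Both routes are sound; the paper's makes the bialgebra mechanism explicit, which matters for the surrounding discussion of $\mathbb{M}(\Z)$, while yours is shorter and self-contained.
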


\begin{proof} By definition we have that $\zeta_{\F_{1^m}}(\mathbb{T}^i) = exp(\sum_{k \geq 1} m^i k^{i-1}t^k)$, and therefore, because $\mathghost$ corresponds to $t \frac{d}{dt} log(-)$, we have that
\[
 \mathghost(\zeta_{\F_{1^m}}(\mathbb{T}^i)) = (m^i,m^i 2^i,m^i 3^i, \hdots) =  \mathghost(\zeta_{\F_{1^m}}(\mathbb{T}))^i  \]
To enforce commutativity with a ringmorphism $c_{\mu}$ we must have that
\[
 c_{\mu}(\mathbb{T}) = (m,2m,3m,\hdots )=m.d+m.1 \]
for the primitive element $d = (0,1,2,\hdots) \in \mathbb{L}(\Z)$, that is, $\Delta(d)=d \otimes 1 + 1 \otimes d$ and $\epsilon(d)=0$ and with $1=(1,1,1,\hdots) \in \mathbb{L}(\Z)$.

But then, for the Lie algebra structure on $K_0(\mathbf{Var}_{\F_{1^m}}^{tor})$ we have that $c_{\mu}(\mathbb{D})$ is the primitive element $m.d \in \mathbb{L}(\Z)$, and therefore $c_{\mu}$ is the unique bi-ring morphism $K_0(\mathbf{Var}_{\F_{1^m}}^{tor}) \rTo \mathbb{L}(\Z)$ corresponding to the motivic measure $\mu_{2m} : K_0(\mathbf{Var}_{\F_{1^m}}^{tor}) \rTo \Z$ because the second component of $c_{\mu}(\mathbb{T}) = 2m$.

Suppose there would be a ringmorphism $r : K_0(\mathbf{Var}_{\F_{1^m}}^{tor}) \rTo \mathbb{W}_0(\Z)$, then we must have that $\mathghost(r(\mathbb{T} - m) )= m.d \in \mathbb{L}(\Z)$. By functoriality we have a commuting square
\[
\xymatrix{\mathbb{W}_0(\Z) \ar[r] \ar[d] & \mathbb{W}_0(\overline{\Q}) = \Z[\overline{\Q}^{\ast}_{\times}] \ar[d] \\
\mathbb{L}(\Z) \ar[r] & \mathbb{L}(\overline{\Q}) = (\overline{\Q}[\overline{\Q}^{\ast}_{\times}] \otimes \overline{\Q}[ d ]) \oplus K} \]
and $d$ does not lie in the image of the rightmost map.
\end{proof}

Because $K_0(\mathbf{Var}_{\F_1}^{tor})$ is both a $\lambda$-ring (with $\Psi_k(\mathbb{L}^i)=\mathbb{L}^{ki}$) and a bi-ring (with the Lie algebra structure with primitive element $\mathbb{D}=\mathbb{T}-1$) we have natural one-to-one correspondences
\[
\mathbf{comm}^+_{bi}(K_0(\mathbf{Var}_{\F_1}^{tor}),\mathbb{L}(\Z)) \leftrightarrow \mathbf{comm}(K_0(\mathbf{Var}_{\F_1}^{tor}),\Z) \leftrightarrow \mathbf{comm}^+_{\lambda}(K_0(\mathbf{Var}_{\F_1}^{tor}),\mathbb{W}(\Z)) \]
Under the left correspondence, the motivic measure $\mu_m$ defined by $\mu_m(\mathbb{T})=m$ corresponds to the bi-ring morphism
\[
b_m : K_0(\mathbf{Var}_{\F_1}^{tor}) = \Z[\mathbb{D}] \rTo \mathbb{L}(\Z) \qquad \mathbb{D} \mapsto (m-1).d = (0, m-1,2(m-1),\hdots ) \]
as $b_m(\mathbb{T}) = (1,m,2m-1,\hdots )$ and the coresponding ring-morphism to $\Z$ is composing with projection on the second factor.

Under the right correspondence, the motivic measure $\mu_m$ corresponds to the $\lambda$-ring morphism 
$l_m : K_0(\mathbf{Var}_{\F_1}^{tor~}) = \Z[\mathbb{L}] \rTo \mathbb{W}(\Z)$ 
\[
 \mathbb{L}  \mapsto \frac{1}{1-(m+1)t} = 1 + (m+1) t + (m+1)^2 t^2 + \hdots \]
as $l_m(\mathbb{T}) = (1-t).b_l(\mathbb{L}) = 1 + m t + m(m+1) t^2 + \hdots$ and the corresponding ring morphism to $\Z$ is $\mathghost_1(l_m(\mathbb{T})) = m$. 

It follows from propositions~\ref{rat} and \ref{nonrat} that these morphisms factor through the pull-back $\mathbb{M}(\Z)$.
\[
\xymatrix{\mathbb{W}_0(\Z)  \ar@/^3ex/[rrd]^{L_{\Z}} \ar@/_4ex/[rdd]_{\mathghost} \ar[rd] & & \\
& \mathbb{M}(\Z) \ar[r] \ar[d] & \mathbb{W}(\Z) \ar[d]^{\mathghost} \\
& \mathbb{L}(\Z) \ar[r] & \Z^{\infty}} \]
Motivated by this, one might view $\mathbb{M}(\Z)$ as the correct receptacle for ringmorphisms $K_0(\mathbf{Var}_{\Z}) \rTo \mathbb{W}(\Z)$ determined by a counting measure $K_0(\mathbf{Var}_{\Z}) \rTo \Z$. Here, local factors corresponding to non-archimedean places can be distinguished from the $\Gamma$-factors by the fact that they factor through $\mathbb{W}_0(\Z)$.

\vskip 3mm

\section{Linear systems and zeta-polynomials} \label{canonical}

The original motivation for proposing bi-rings as $\F_1$-algebras was to give a potential explanation of Manin's interpretation of Deninger's $\Gamma$-factor $\prod_{n \geq 0} \tfrac{s-n}{2 \pi}$ at complex infinity as the zeta function of (the dual of) infinite dimensional projective space $\mathbb{P}^{\infty}_{\F_1}$, see \cite[4.3]{ManinNumbers} and \cite[Intro]{ManinZeta}. In \cite{LBrecursive} a noncommutative moduli space was constructed using linear dynamical systems having the required motive. This suggests the introduction of the category $\mathcal{S}_R$ of discrete $R$-linear dynamical systems, which plays a similar role for $\mathbb{L}(R)$ as does the endomorphism category $\mathcal{E}_R$ for $\mathbb{W}_0(R)$ and $\mathbb{W}(R)$.

\vskip 3mm

For $R$ a commutative ring  consider the category $\mathcal{S}_R$ with objects quadruples $(E,f,v,c)$ with $E$ a projective $R$-module of finite rank, $f \in End_R(E)$, $v \in E$ and $c \in E^*$ and with morphisms $R$-module morphisms $\phi : E \rTo E'$ such that $\phi \circ f = f' \circ \phi$, $\phi(v)=v'$ and $c=c' \circ \phi$. A quadruple $(E,f,v,c)$ can be seen as an $R$-representation of the quiver
\[
\xymatrix{\vtx{R} \ar@/^2ex/[rr]^v & & \vtx{E} \ar@/^2ex/[ll]^c \ar@(ur,dr)^f} \]
and morphisms correspond to quiver-morphisms.

\vskip 3mm

Again, there is a duality $S=(E,f,v,c) \leftrightarrow S^*=(E^*,f^*,c^*,v^*)$ on $\mathcal{S}_R$ and we have $\oplus$ and $\otimes$ operations
\[
\begin{cases}
(E_1,f_1,v_1,c_1) \oplus (E_2,f_2,v_2,c_2) = (E_1 \oplus E_2,f_1 \oplus f_2, v_1 \oplus v_2,c_1 \oplus c_2) \\
(E_1,f_1,v_1,c_1) \otimes (E_2,f_2,v_2,c_2) = (E_1 \otimes E_2,f_1 \otimes f_2, v_1 \otimes v_2,c_1 \otimes c_2)
\end{cases}
\]
with a zero object $\mathbf{0}=(0,0,0,0)$ and a unit object $\mathbf{1}=(R,1,1,1)$.

\vskip 3mm

We will call a quadruple $S=(E,f,v,c)$ a {\em discrete $R$-linear dynamical system}. Borrowing terminology from system theory, see for example \cite[[VI.\S 5]{Tannenbaum}, we define:

\begin{definition} For $S=(E,f,v,c) \in \mathcal{S}_R$ with $E$ of rank $n$, we say that
\begin{enumerate}
\item{$S$ is {\em completely reachable} if $E$ is generated as $R$-module by the elements $\{ v,f(v),f^2(v),\hdots \}$.}
\item{$S$ is {\em completely observable} if the $R$-module morphism $\phi : E \rTo R^n$ given by $\phi(x) = (c(x),c(f(x)),\hdots,c(f^{n-1}(x)))$ is injective.}
\item{$S$ is a {\em canonical system} if $S$ is both completely reachable and completely observable.}
\item{$S$ is a {\em split system} if both $S$ and $S^*$ are completely reachable.}
\end{enumerate}
\end{definition}

\begin{definition}
There is  an additive and multiplicative bat-map
\[
\mathbat_R~:~\mathcal{S}_R \rTo \mathbb{L}(R) \qquad (E,f,v,c) \mapsto (c(v),c(f(v)),c(f^2(v)),c(f^3(v)),\hdots ) \]
sending a linear dynamical system to its input-output or transfer sequence. We say that a linear recursive sequence $s=(s_0,s_1,s_2,\hdots ) \in \mathbb{L}(R)$ is {\em realisable} by the system $(E,f,v,c) \in \mathcal{S}_R$ if $\mathbat_R(E,f,v,c)=s$.
\end{definition}

\begin{remark} In system theory, see for example  \cite[VI.\S 5]{Tannenbaum}, one relaxes the condition on the state-space $E$ which is merely an $R$-module and replaces the $rk(E)=n$ condition by the requirement that $E$ is generated by $n$ elements. 
\end{remark}

We will now prove that every element $s \in \mathbb{L}(R)$ is realisable by a completely reachable system and verify when this system is in addition canonical, respectively split.

For $s = (s_0,s_1,s_2,\hdots ) \in \mathbb{L}(R)$ satisfying the recurrence relation $s_n = a_1 s_{n-1} + a_2 s_{n-2} + \hdots + a_r s_{n-r}$ of depth $r$, valid for all $n \in \N$ with the $a_i \in R$. Consider the system $S_s=(E_s,f_s,v_s,c_s) \in \mathcal{S}_R$ with
\[
E_s = \frac{R[x]}{(x^r - a_1 x^{r-1} -  \hdots - a_r)}, \quad f_s=x. | E_s, \quad v_s=1 \in E_s, \quad c_s(x^i)=s_i \]
and consider the $r \times r$ matrix, with $r$ the depth of the recurrence relation
\[
H_r(s) = \begin{bmatrix} s_0 & s_1 & s_2 & \hdots & s_{r-1} \\
s_1 & s_2 & s_3 & \hdots & s_r \\
s_2 & s_3 & s_4 & \hdots & s_{r+1} \\
\vdots & \vdots & \vdots & & \vdots \\
s_{r-1} & s_r & s_{r+1} & \hdots & s_{2r-2} \end{bmatrix} \]

\begin{proposition} With notations as above, $s \in \mathbb{L}(R)$ is realisable by the system $S_s = (E_s,f_s,v_s,c_s) \in \mathcal{S}_R$, and
\begin{enumerate}
\item{$S_s$ is completely reachable, }
\item{$S_s$ is canonical if and only if $det(H_r(s)) \not= 0$,}
\item{$S_s$ is split if and only if $det(H_r(s)) \in R^{\ast}$.}
\end{enumerate}
\end{proposition}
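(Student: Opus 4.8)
The plan is to work directly with the explicit system $S_s = (E_s, f_s, v_s, c_s)$ and translate each of the four claims into a concrete statement about the quotient algebra $E_s = R[x]/(x^r - a_1 x^{r-1} - \hdots - a_r)$ and the linear functional $c_s$. First I would verify realisability: by construction $f_s^i(v_s) = x^i \cdot 1 = x^i$ in $E_s$, so $c_s(f_s^i(v_s)) = c_s(x^i) = s_i$, which gives $\mathbat_R(S_s) = (s_0, s_1, s_2, \hdots) = s$ — provided $c_s$ is well-defined, i.e. that the recurrence of depth $r$ really is consistent with the relation $x^r = a_1 x^{r-1} + \hdots + a_r$ in $E_s$, which is exactly the hypothesis that $s_n = a_1 s_{n-1} + \hdots + a_r s_{n-r}$ holds for all $n$. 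For (1), complete reachability is immediate: $\{1, x, x^2, \hdots\}$ spans $E_s$ as an $R$-module (indeed $\{1, x, \hdots, x^{r-1}\}$ is a free basis, since the defining polynomial is monic of degree $r$), and these are precisely $\{v_s, f_s(v_s), f_s^2(v_s), \hdots\}$.

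For (2) and (3) the key observation is that, in the free basis $\{1, x, \hdots, x^{r-1}\}$ of $E_s$, the observability map $\phi : E_s \to R^r$, $\phi(y) = (c_s(y), c_s(f_s(y)), \hdots, c_s(f_s^{r-1}(y)))$ is represented by a matrix whose entry in position $(i,j)$ is $c_s(f_s^i(x^j)) = c_s(x^{i+j}) = s_{i+j}$ — that is, $\phi$ is given (relative to this basis on the source and the standard basis on the target) precisely by the Hankel matrix $H_r(s)$. Since $S_s$ is already completely reachable by (1), it is canonical if and only if it is completely observable, i.e. if and only if $\phi$ is injective, i.e. if and only if $\det(H_r(s)) \neq 0$ (as $R$ is a domain, injectivity of a square matrix over $R$ is equivalent to non-vanishing determinant); this is (2). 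For (3), $S_s$ is split if and only if both $S_s$ and $S_s^* = (E_s^*, f_s^*, c_s^*, v_s^*)$ are completely reachable. Complete reachability of $S_s^*$ means $\{c_s^*, f_s^*(c_s^*), \hdots\}$ generate $E_s^*$ as an $R$-module; since $f_s^*$ is the adjoint of multiplication by $x$ and $c_s^* = c_s$, one computes that $(f_s^i)^*(c_s)$ evaluated on $x^j$ equals $s_{i+j}$ again, so this amounts to requiring that the rows of $H_r(s)$ generate $R^r$, equivalently that $H_r(s)$ is invertible over $R$, equivalently $\det(H_r(s)) \in R^*$. (The symmetry of the Hankel matrix makes the roles of $S_s$ and $S_s^*$ match up cleanly, which is why reachability of $S_s^*$ — rather than merely of $S_s$, which we already have — is the genuine content.)

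The main obstacle I expect is bookkeeping rather than conceptual: one must be careful to distinguish the $R$-module $E_s$ being \emph{generated} by $r$ elements from its having \emph{rank} exactly $r$ (which holds here, but only because the defining polynomial is monic), and to verify the identification of $\phi$ with $H_r(s)$ in a fixed pair of bases without an off-by-one error in indexing the powers $f_s^0, \hdots, f_s^{r-1}$. A secondary subtlety is the precise meaning of ``injective matrix over a domain $R$'': here one uses that for a square matrix $M$ over an integral domain $R$ with fraction field $K$, $M$ is injective on $R^r$ iff $M$ is injective on $K^r$ iff $\det M \neq 0$ in $R$; this is where the hypothesis that $R$ is a domain (via the standing assumptions of the section) is used for part (2), while part (3) needs the sharper statement that an integer — or rather $R$- — matrix has an inverse over $R$ iff its determinant is a unit of $R$.
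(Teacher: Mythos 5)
Your proof follows essentially the same line as the paper's: you identify the observability map $\phi$ (in the basis $\{1,x,\hdots,x^{r-1}\}$) with the Hankel matrix $H_r(s)$ to get (2), and you compute $(f_s^i)^*(c_s)$ in the dual basis to show that reachability of $S_s^*$ amounts to the rows of $H_r(s)$ spanning $R^r$, giving (3). Your explicit remark that part (2) uses the standing hypothesis that $R$ is a domain (so that a square matrix is injective iff its determinant is nonzero) is a correct and useful clarification that the paper leaves implicit.
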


\begin{proof}
Clearly, $E_s$ is a free $R$-module of rank $r$ and one verifies that $\mathbat_R(S_s) = s$. Further, $\{ v_s,f_s(v_s),f_s^2(v_s),\hdots,f_s^{r-1}(v_s) \} = \{ 1,x,x^2,\hdots,x^{r-1} \}$ and these elements generate $E_s$ whence $S_s$ is completely reachable. The $R$-module morphism $\phi : E_s \rTo R^r$ defined by $\phi(e)=(c_s(e),c_s(f_s(e)),\hdots,c(f^{r-1}(e)))$ is determined by the images
\[
\phi(x^i) = (s_i,s_{i+1},\hdots,s_{i+r-1}) \]
for $0 \leq i < r$ and as these $x^i$ form an $R$-basis for $E_s$, the map $\phi$ is injective, or equivalently that $S_s$ is completely observable if and only if $det(H_r(s)) \not= 0$.

The dual module, $E_s^* = R \epsilon_0 \oplus \hdots \oplus R \epsilon_{r-1}$ where $\epsilon_i(x^j) = \delta_{ij}$. With respect to this basis we have $f_s^*(\epsilon_i) = \epsilon_{i-1}+a_{r-i} \epsilon_{r-1}$ for $i \geq 1$ and $f_s^*(\epsilon_0)=a_r \epsilon_{r-1}$, that is
\[
M_{f_s^*} = \begin{bmatrix} 0 & 1 & & 0 \\
\vdots & & \ddots &  \\
0 & 0 & & 1 \\
a_r & a_{r-1} & \hdots & a_1 \end{bmatrix}  \qquad c_s^* = \begin{bmatrix} s_0 \\ s_1 \\ \vdots \\ s_{r-1} \end{bmatrix} \]
and $v_s^* = (1,0,\hdots,0)$. It follows that $\{ c_s^*,f_s^*(c_s^*),f_s^{* 2}(c_s^*),\hdots,f_s^{* n}(c_s^*) \}$ generate $E_s^*$ if and only if $H_r(s) \in GL_r(R)$.
\end{proof}

\vskip 3mm

\begin{example} \label{F1system} Consider the sequence $s=(1,2,3,\hdots )$ which we encountered in our study of the $\F_1$-zeta function. We have
\[
\begin{bmatrix} 1 & 2 \\ 2 & 3 \end{bmatrix} \in GL_2(\Z) \quad \text{and} \quad det~\begin{bmatrix} 1 & 2 & 3 \\ 2 & 3 & 4 \\ 3 & 4 & 5 \end{bmatrix} = 0 \]
leading to the (minimal) recurrence relation $x^2-2x+1=(x-1)^2$. The corresponding system $S_s=(E_s,f_s,v_s,c_s)$ is split and determined by
\[
E_s = \frac{\Z[x]}{(x-1)^2}, \quad f_s = \begin{bmatrix} 0 & -1 \\ 1 & 2 \end{bmatrix}, \quad v_s = \begin{bmatrix} 1 \\ 0 \end{bmatrix}, \quad \text{and} \quad c_s = \begin{bmatrix} 1 & 2 \end{bmatrix} \]
\end{example}

\vskip 3mm

Clearly, if $S=(E,f,v,c)$ is split, it is a canonical system. Over a field $K$ the converse is also true. Note that the difference between canonical and split systems over $R$ is also important for the co-multiplication on $\mathbb{L}(R)$. 

\vskip 3mm

Over a field $K$ every recursive sequence $s=(s_0,s_1,\hdots) \in \mathbb{L}(K)$ has a {\em minimal} canonical realisation, that is, one with the dimension of the state-space $E$ minimal. To find it, start with a recursive relation  $s_n = a_1 s_{n-1} + a_2 s_{n-2} + \hdots + a_r s_{n-r}$ of depth $r$ and form as above the matrix $H_r(s)$ with columns $H_0,H_1,\hdots,H_{r-1}$. Let $t$ be the largest integer such that the columns $H_0,H_1,\hdots,H_{t-1}$ are linearly independent. If $t=r$ then the previous lemma gives a minimal canonical realisation. If $t < r$ then we have unique coefficients $\alpha_i \in K$ such that $H_t = \alpha_1 H_{t-1} + \alpha_2 H_{t-2} + \hdots + \alpha_t H_0$. But then, it follows that
\[
s_n = \alpha_1 s_{n-1} + \alpha_2 s_{n-2} + \hdots + \alpha_t s_{n-t} \]
is a recursive relation for $s$ of minimal depth $t$. Using this recursive relation we can then construct a canonical realisation as in the previous lemma, with. this time a state-space of minimal dimension. Over a Noetherian domain $R$ one always has a canonical realisation (in the weak sense that the state module $E$ does not have to be projective) see \cite[Theorem IV.5.5]{Tannenbaum} and if $R$ is a principal ideal every linear recursive sequence has a minimal canonical realisation, with free state module, see \cite[VI.5.8.iii]{Tannenbaum}.

\vskip 3mm

Over a field $K$ we know that canonical systems $S_K=(E_K,f_K,v_K,c_K)$, with $dim(E_K)=n$ are also classified up to isomorphism by their {\em transfer function}
\[
T_{S_K}(z) = c_K (zI - M_{f_K})^{-1} v_K  = \frac{Y(z)}{X(z)} = \frac{c_{n-1}z^{n-1}+ \hdots + c_1 z + c_0}{z^n+d_{n-1} z^{n-1} + \hdots + d_1 z + d_0} \]
which are strictly proper rational functions of McMillan degree $n$, that is, $deg(Y(z)) < deg(X(z))=n$ (this is immediate from Cramer's rule) and $(Y(z),X(z))=1$, see for example \cite[II.\S 5]{Tannenbaum}.

\begin{proposition}
Let $T(z) = \frac{Y(z)}{X(z)}$ be a strictly proper rational $K$-function with $Y(z),X(z) \in R[z]$, then there is a completely reachable $R$-linear system $S=(E,f,v,c)$ such that $T(z) = c (zI-M_f)^{-1} v$. If $R$ is a principal ideal domain, this can be achieved by a minimal canonical system.
\end{proposition}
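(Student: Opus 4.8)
The plan is to reduce to the classical companion-matrix construction from the previous proposition, being careful that all data stay inside $R$ rather than merely inside $K$. Write $X(z) = z^n + d_{n-1}z^{n-1} + \hdots + d_1 z + d_0$ and $Y(z) = c_{n-1}z^{n-1} + \hdots + c_1 z + c_0$ with $d_i, c_i \in R$; strict properness gives $\deg Y < \deg X = n$. Expand the strictly proper rational function as a power series
\[
T(z) = \frac{Y(z)}{X(z)} = \sum_{k \geq 0} s_k z^{-(k+1)} ,
\]
so that the coefficients $s_k \in K$ are determined by the convolution identity $Y(z) = X(z) \cdot \sum_{k \geq 0} s_k z^{-(k+1)}$. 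Matching coefficients shows first that the initial terms $s_0, s_1, \dots, s_{n-1}$ are obtained from the $c_i$ by a \emph{unitriangular} integral change of variables (the leading coefficient of $X(z)$ being $1$), hence lie in $R$; and secondly that for $k \geq n$ one has $s_k = -d_{n-1} s_{k-1} - \hdots - d_0 s_{k-n}$, so \emph{all} $s_k \in R$ and $s = (s_0, s_1, s_2, \hdots) \in \mathbb{L}(R)$ is a linear recursive sequence of depth $\leq n$ with characteristic polynomial dividing $X(z)$.

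Now apply the construction preceding the previous proposition to this sequence $s$: set $m(x) = X(x)$ (or, if one wants depth exactly equal to the McMillan degree, the minimal integral recurrence, which over a field equals $X$ by coprimality of $Y$ and $X$), and take
\[
E = \frac{R[x]}{(X(x))}, \quad f = x \cdot {\mid}_E, \quad v = 1 \in E, \quad c(x^i) = s_i .
\]
By that proposition, $S = (E,f,v,c)$ is completely reachable and $\mathbat_R(S) = s$. It remains to identify the transfer function: a direct computation with $M_f$ the companion matrix of $X$ shows that $c(zI - M_f)^{-1} v = \sum_{k \geq 0} c(f^k(v)) z^{-(k+1)} = \sum_{k\geq 0} s_k z^{-(k+1)} = T(z)$, where the first equality is the Neumann-series expansion $(zI - M_f)^{-1} = \sum_{k \geq 0} M_f^k z^{-(k+1)}$, valid as a formal identity in $R[[z^{-1}]]$. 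This gives the first claim.

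For the refinement when $R$ is a principal ideal domain: here we invoke the cited result from system theory (Tannenbaum, VI.5.8.iii, already referenced in the text) that every linear recursive sequence over a principal ideal domain admits a minimal canonical realisation with free state module. Applying this to $s \in \mathbb{L}(R)$ produces a canonical system $S'$ with free state space of rank equal to the minimal recurrence depth of $s$; over $K$ this minimal depth is the McMillan degree $n$ of $T(z)$ because $(Y,X) = 1$ forces no pole–zero cancellation, and the rank cannot drop under $R \hookrightarrow K$. The same Neumann-series identification shows $T_{S'}(z) = T(z)$, completing the proof. \emph{The main obstacle} I anticipate is the integrality bookkeeping in the first paragraph — confirming that passing from the numerator coefficients $c_i$ to the sequence terms $s_i$ (and back) never leaves $R$ — together with checking that the minimal realisation over $R$ really has state-module rank $n$ and not something smaller; both hinge on the monic normalisation of $X(z)$ and the coprimality hypothesis, and neither is deep, but they are the places where a careless argument would break.
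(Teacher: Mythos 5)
Your proof is correct and follows essentially the same route as the paper's: realize $T(z)$ by a companion-matrix construction over $R$, note complete reachability, and invoke the cited Tannenbaum result on minimal canonical realizations over a principal ideal domain. The only difference is cosmetic — you compute the Markov parameters $s_k$ explicitly (checking their integrality) and feed them into the $R[x]/(X(x))$ construction from the preceding proposition (the ``observer'' form), whereas the paper writes down the controller canonical form directly with the numerator coefficients $c_0,\dots,c_{n-1}$ as the output row, thereby sidestepping the unitriangular integrality bookkeeping that you carry out; both then identify the transfer function via the same Neumann-series expansion and pass to a minimal canonical system over a PID in identical fashion.
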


\begin{proof}
We can always find an $R$-system $S'=(E',f',v',c')$ with transfer function $T(z) = c'.(zI - M_{f'})^{-1}.v'$, with $E'=R^n$
\[
f' = \begin{bmatrix} 0 & 1 & 0 & \hdots & 0 \\ 0 & 0 & 1 & \hdots & 0 \\
\vdots & \vdots & & \ddots & \\ 0 & 0 & 0 & \hdots & 1 \\ -d_0 & -d_1 & -d_2 & \hdots & -d_{n-1} \end{bmatrix} \quad v' = \begin{bmatrix} 0 \\ 0 \\ \vdots \\ 0 \\ 1 \end{bmatrix} \quad c' = \begin{bmatrix} c_0 & c_1 & \hdots & c_{n-1} \end{bmatrix}  \]
and this system is completely reachable as $\{ v',f'(v'),f^{'2}(v'),\hdots \}$ generate $R^n$. However, it need not be canonical in general. Still, we can consider its input-output sequence
\[
\mathbat_R(S') = (c'.v',c'.M_{f'}.v',c'.M_{f'}^2.v',\hdots ) \in \mathbb{L}(R) \]
By surjectivity on canonical systems in case $R$ is a principal ideal domain, there is a canonical $R$-system $S=(E,f,v,c)$ with $\mathbat_R(S) = \mathbat_R(S')$, that is,
\[
c'.v' = c.v,~c'.M_{f'}.v'=c.M_f.v,~c'.M_{f'}^2.v' = c.M_f^2.v,~\hdots \]
But, as $T(z) = c'.(zI-M_{f'})^{-1}.v' = c'.v' z^{-1} + c'.M_{f'}.v' z^{-2} + c'.M_{f'}^2.v'.z^{-3} + \hdots $ we see that $T(z)$ is also the transfer function of the canonical $R$-system $S$, proving the claim.
\end{proof}

\begin{definition} For a cyclotomic Bost-Connes datum $\Sigma$, let $\mathcal{S}^{cr}_{\Sigma,R}$ be the full subcategory of $\mathcal{S}_R$ consisting of all completely reachable systems $S=(E,f,v,c)$ such that all zeroes and poles of the transfer function
\[
T_S(z) = c.(zI-M_f)^{-1}.v \]
are in $\Sigma$.
\end{definition}

\begin{example} Continuing example~\ref{F1system}, we have for $T_{S_s}$
\[
\begin{bmatrix} 1 & 2 \end{bmatrix} \begin{bmatrix} z & 1 \\ -1 & z-2 \end{bmatrix}^{-1} \begin{bmatrix} 1 \\ 0 \end{bmatrix} = \frac{z}{(z-1)^2} = Li_{-1} \]
\end{example}

\vskip 3mm

\subsection{Zeta polynomials}
An interesting class of strictly proper rational functions is associated to Manin's 'zeta polynomials' introduced in \cite[\S 1]{ManinZeta} and generalized in \cite{JinZeta} and \cite{OnoZeta}, see also \cite[\S 2.5]{ManinMarcolli}. The terminology comes from a result of F. Rodriguez-Villegas \cite{Rodriguez}. Let $U(z)$ be a polynomial of degree $e$ with $U(1) \not= 0$ and consider the strictly proper rational function
\[
P(z) = \frac{U(z)}{(1-z)^{e+1}} \]
There is a polynomial $H(z)$ of degree $e$ such that the power series expansion of $P(z)$ is
\[
P(z) = \sum_{n=0}^{\infty} H(n) z^n \]
If all roots of $U(z)$ lie on the unit circle, Rodriguez-Villegas proved that the polynomial $Z(z)=H(-z)$ has zeta-like properties: all roots of $Z(z)$ lie on the vertical line $Re(z)=\frac{1}{2}$ and if all coefficients of $U(z)$ are real then $Z(z)$ satisfies the functional equation
\[
Z(1-z) = (-1)^e Z(s) \]
In \cite[\S 1]{ManinZeta} Yuri I. Manin associates such  a zeta-polynomial to each cusp $f$ form of $\Gamma = PSL_2(\Z)$ which is an eigenform for all Hecke operators, and views this polynomial as 'the local zeta factor in characteristic one'. The corresponding numerator $U_f(z)$ of the strictly proper rational function comes from the period polynomial divided by the real zeroes and by \cite{Conrey} the remaining zeros all lie on the unit circle.

In \cite{JinZeta} this construction was generalised to the case of cusp newforms of even weight for the congruence subgroups $\Gamma_0(N)$, where this time the zeroes of period polynomials all lie on the circle with radius $\frac{1}{\sqrt{N}}$. 

\vskip 3mm

Let $Z_i(z)$ be a suitable collection of zeta-polynomials determined by strictly proper rational functions $P_i(z)=\tfrac{U_i(z)}{(1-z)^{d_i}}$ with $U_i(z) \in R[z]$ then we can view the sub bi-ring of $\mathbb{L}(\Z)$ generated by the elements $\mathbat_{R}(S_i) \in \mathbb{L}(R)$, where $S_i$ is a completely reachable or minimal canonical system realizing $P_i(z)$, as a representative for the collection of zeta-polynomials in the $\mathbf{comm^+_{bi}}$-version of $\F_1$- geometry. Again, we can define similarly versions relative to a cyclotomic Bost-Connes datum $\Sigma$ by imposing that the zeroes of the zeta-polynomials must lie in $\Sigma$.

\end{document}